\documentclass[11pt,a4paper]{article}
\usepackage[noadjust]{cite}
\usepackage{mathrsfs}
\usepackage{amsfonts}
\usepackage{amsmath,amsthm}
\usepackage{amsfonts,amssymb,color}
\usepackage{dsfont}
\usepackage{curves}
\usepackage{pifont}
\usepackage{latexsym,amssymb,amsfonts,epsfig,graphicx,cite,psfrag,tikz}
\usepackage{eepic,color,colordvi,amscd}
\usepackage{enumerate}
\usepackage{graphicx}
\usepackage{subfigure}
\usepackage{float}
\usepackage{caption}
\usepackage{authblk}
\usepackage{hyperref}
\usepackage{caption}
\usetikzlibrary{decorations.pathreplacing}

\theoremstyle{plain}
\newtheorem{theorem}{Theorem}[section]

\newtheorem{lemma}[theorem]{Lemma}

\newtheorem{claim}[theorem]{Claim}

\theoremstyle{definition}

\numberwithin{equation}{section}

\newtheorem{remark}[theorem]{Remark}

\renewcommand{\l}{\ell}
\newcommand{\ex}{\textrm{ex}}
\newcommand{\Ex}{\mathrm{Ex}}

\renewcommand{\epsilon}{\varepsilon}

\begin{document}


\title{On the Tur\'an number of double stars}

\author{Ping Hu\thanks{Email: {\tt huping9@mail.sysu.edu.cn.} Supported in part by National Key Research and Development Program of China (2021YFA1002100) and National Natural Science Foundation of China (12471337).} }

\author{Ting Lan\thanks{Email: {\tt lant28@mail2.sysu.edu.cn.}} } 

\affil{School of Mathematics, Sun Yat-sen University, Guangzhou, 510275, China.}

\date{\today}

\maketitle

\begin{abstract}
The Tur\'an number of a graph $F$, $\ex(n,F)$, is the maximum number of edges in a graph on $n$ vertices which does not contain $F$ as a subgraph. Let $S_{a,b}$ denote a double star with a central edge $uv$, $a$ leaves connected to $u$ and $b$ leaves connected to $v$. The function $\ex(n,S_{a,b})$ has been studied for $a=1,2$, their extremal graphs are disjoint copies of $K_{a+b+1}$ and either a small clique or a near $b$-regular graph. In this paper, we further study $\ex(n,S_{3,b})$ and determine the extremal graphs, which have more structures than those of $a=1,2$.

\begin{flushleft}
{\em Keywords:} Turán number; double star\\
\end{flushleft}

\end{abstract}

\section{Introduction}
All graphs considered here are finite, simple, and undirected. For any undefined terms in graph theory, see \cite{BM1976}. Let $G$ be a graph and $v\in V(G)$ be a vertex. The \textit{neighborhood} of $v$ is $N_{G}(v)=\{u\in V(G):uv\in E(G)\}$ and $N_{G}[v]=N_{G}(v)\cup \{v\}$. The \textit{degree} of $v$ is $d_{G}(v)=|N_{G}(v)|$. 
For $A,B\subseteq V(G)$, let $e(A,B)$ denote the number of edges between $A$ and $B$. Moreover, we define $d_{A}(v)=|N_{G}(v)\cap A|$. Let $G[A]$ denote the subgraph of $G$ induced by $A$, and let $G-A=G[V(G)\setminus A]$ be the subgraph of $G$ obtained by deleting all vertices in $A$.
Let $P_{t}$, $S_{t}$ $K_{t}$ denote the path, the star and the complete graph on $t$ vertices. Let $K_{s,t}$ denote the complete bipartite graph with parts of size $s$ and $t$. Let $S_{a,b}$ denote the double star with a \textit{central edge} $uv$, $a$ leaves connected to $u$ and $b$ leaves connected to $v$. 
Let $kG$ denote $k$ vertex-disjoint copies of $G$. 
The \textit{distance} between two vertices $u,v\in V(G)$, denoted by $d_{G}(u,v)$ or simply $d(u,v)$, is the minimum number of edges of paths from $u$ to $v$ in $G$.
A \textit{near $r$-regular} graph on $n$ vertices is an $r$-regular graph if $rn$ is even, otherwise it has $n-1$ vertices of degree $r$ and one vertex of degree $r-1$.

For a fixed graph $F$, we say $G$ is \textit{$F$-free} if $G$ does not contain $F$ as a subgraph. The \textit{Tur\'an number} of $F$, denoted by $\ex(n,F)$, is the maximum number of edges in an $F$-free graph on $n$ vertices.
Tur\'an’s classical result \cite{T1941} states that $\ex(n,K_{r+1})=e(T_{r}(n))$ for $n\ge r\ge 2$, where $T_{r}(n)$ is the complete balanced $r$-partite Tur\'an graph on $n$ vertices. Moreover, $T_{r}(n)$ is the unique extremal graph attaining $\ex(n,K_{r+1})$. Erdös and Gallai~\cite{EG1959} proved that $\ex(n,P_{\l})\le (\frac{\l}{2}-1)n$ for $\l\ge 2$ and the equality holds only for the graph with vertex-disjoint copies of $K_{\l-1}$ if $\l-1$ divides $n$. Following this, Faudree and Schelp \cite{FS1975} determined the function $\ex(n,P_{\l})$. Inspired by the result of $P_{\l}$, there is a long-standing conjecture introduced by Erdös and Sós \cite{E1964}: if $T$ is a tree on $t\ge 2$ vertices, then $\ex(n,T)\le (\frac{t}{2}-1)n$. 
For the function $\ex(n,S_{r})$, if $n\le r-1$, then it is easy to see that $\ex(n,S_{r})=e(K_{n})$. If $n\ge r$, then $\ex(n,S_{r})=e(L)$, where $L$ is a near $(r-2)$-regular graph on $n$ vertices, and the extremal graphs are all such graphs $L$. Furthermore, there are many known results for $\ex(n,F)$ when $F$ is a linear forest (e.g. \cite{EG1959, LLP2013, LMY2026, YZ2017a, YZ2021b}) or a star forest (e.g. \cite{LLP2013, LLST2019, LYL2022}) or a path-star forest (e.g. \cite{FCY2026, FY2025, LLP2013, ZW2022}).

Given graphs $H$ and $F$. The \textit{generalized Tur\'an number} $\ex(n,H,F)$ is the maximum number of copies of $H$ in an $F$-free graph on $n$ vertices. Clearly, when $H=K_{2}$ is a single edge, $\ex(n,H,F)$ is the function $\ex(n,F)$. In 2023, Gerbner \cite{G2023} determined the function $\ex(n,K_{k},S_{a,b})$ for $k\ge 3$, as follows.

\begin{theorem}\label{thm:generalized} {\em(\cite{G2023})}
Let $n=p(a+b+1)+q$ with $q\le a+b$. Then, for $k\ge 3$ we have
\[
\ex(n, K_{k}, S_{a,b}) = p \binom{a+b+1}{k} + \binom{q}{k}.
\]
\end{theorem}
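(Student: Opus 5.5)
We may assume $a\le b$, since $S_{a,b}\cong S_{b,a}$. Write $f(n)=p\binom{a+b+1}{k}+\binom{q}{k}$ for $n=p(a+b+1)+q$ with $0\le q\le a+b$.

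\emph{Lower bound.} Take $pK_{a+b+1}\cup K_q$. The double star $S_{a,b}$ is connected and has $a+b+2$ vertices, while every component here has at most $a+b+1$ vertices. So this graph is $S_{a,b}$-free, and it contains exactly $f(n)$ copies of $K_k$.

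\emph{Upper bound, strategy.} I argue by induction on $n$. The increment is
\[
f(n)-f(n-1)=\binom{q-1}{k-1}\ \text{ if } q\ge 1,\qquad f(n)-f(n-1)=\binom{a+b}{k-1}\ \text{ if } q=0 .
\]
It therefore suffices to show that every $S_{a,b}$-free graph $G$ on $n$ vertices has either a vertex lying in at most $f(n)-f(n-1)$ copies of $K_k$, or a structure that can be split off directly. In the second case we delete a whole component $C$ with $|C|\le a+b+1$. This contributes at most $\binom{|C|}{k}$ cliques, and I would check by convexity of $\binom{x}{k}$ that $f(n-|C|)+\binom{|C|}{k}\le f(n)$.

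\emph{Local structure.} These facts come from greedily completing a double star.
\begin{itemize}
\item[(i)] If $d(v)\ge a+b+1$, then every neighbour $u$ of $v$ has $d(u)\le a$. Otherwise we choose $a$ leaves for $u$ in $N(u)\setminus\{v\}$, and $v$ still has at least $d(v)-1-a\ge b$ other neighbours to serve as its leaves.
\item[(ii)] Consequently a vertex $v$ of large degree lies in few $K_k$'s. Its neighbourhood induces a graph of maximum degree at most $a-1$, so $v$ lies in at most $\frac{d(v)}{k-1}\binom{a-1}{k-2}$ copies of $K_k$. More usefully, every clique through $v$ contains a vertex of degree at most $a$, and such a vertex lies in at most $\binom{a}{k-1}\le\binom{q-1}{k-1}$ cliques whenever $q>a$.
\item[(iii)] If every vertex has degree at most $a+b$, each vertex lies in at most $\binom{a+b}{k-1}$ cliques. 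Summing over vertices gives $\frac{n}{k}\binom{a+b}{k-1}=\frac{n}{a+b+1}\binom{a+b+1}{k}$, which already settles the case $q=0$.
\end{itemize}

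\emph{Main obstacle.} The hard case is $q\ge 1$, and especially $q\le a$, when every vertex lies in more than $\binom{q-1}{k-1}$ cliques. Then we need a component with at most $a+b+1$ vertices to split off. First I would show that a vertex lying in many cliques forces a dense neighbourhood. Next I would use the two-center greedy argument: for an edge $uv$ with $d(u)\ge a+1$, $d(v)\ge b+1$ and $|N(u)\cup N(v)|\ge a+b+2$, there are disjoint leaf sets, giving $S_{a,b}$. This forces $N[u]=N[v]$ for adjacent high-clique vertices, so their common closed neighbourhood is a clique-like block of size at most $a+b+1$ that is separated from the rest of the graph. Making this closure argument rigorous, i.e.\ showing the block is an actual component rather than merely dense, is the step I expect to need the most care. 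If that fails, I would fall back on a weighted double count: distribute each $K_k$ equally to its vertices, and use (i) and (ii) to bound the total weight on each component $C$ of size greater than $a+b+1$ by $f(|C|)$.
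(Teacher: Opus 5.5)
\textbf{Gap in the case $q\ge 1$.} The paper gives no proof of this statement; it only cites Gerbner. Judged on its own, your lower bound, the increment $f(n)-f(n-1)$, the convexity check for splitting off a small component, and the case $q=0$ are all fine. The case $q\ge 1$ has a genuine gap, and the reduction you propose for it is false.

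You say it suffices that every $S_{a,b}$-free graph has either a vertex in at most $\binom{q-1}{k-1}$ copies of $K_k$ or a component on at most $a+b+1$ vertices. Take $q=1$ (increment $0$) and $b\ge 2k-2$, and let $G$ be the $(k-1)$-st power of the cycle $C_n$.
\begin{itemize}
\item $G$ is $(2k-2)$-regular, hence $S_{a,b}$-free, since the $b$-side centre of $S_{a,b}$ has degree $b+1$.
\item $G$ is connected on $n>a+b+1$ vertices, so it has no small component.
\item Every vertex lies in a $K_k$.
\end{itemize}
So the dichotomy fails.

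Passing to a minimal counterexample does not help. Minimality only gives the same local condition (every vertex lies in more than $f(n)-f(n-1)$ cliques), and this graph satisfies it. The global hypothesis $N(K_k,G)>f(n)$ has to be used, and your plan never uses it.

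The closure step also fails independently. $S_{a,b}$-freeness constrains an edge $uv$ only when $d(u)\ge a+1$ and $d(v)\ge b+1$, and even then it gives only $|N(u)\cup N(v)|\le a+b+1$, not $N[u]=N[v]$. Vertices of degree at most $b$, which can lie in up to $\binom{b}{k-1}\gg\binom{q-1}{k-1}$ cliques, are not constrained at all. Your fallback of bounding each large component by $f(|C|)$ needs the same missing global bound, since the naive count in (iii) exceeds $f(n)$ for every $1\le q\le a+b$.

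\textbf{The missing ingredient.} What you need is a global bound for graphs of bounded maximum degree: the Gan--Loh--Sudakov conjecture, proved by Chase. It states that $\ex(n,K_k,K_{1,D+1})=p\binom{D+1}{k}+\binom{q}{k}$ for $n=p(D+1)+q$, $0\le q\le D$, $k\ge 3$. For $a=0$, where $S_{0,b}=K_{1,b+1}$, the statement you are proving is literally this theorem. That is a strong sign that a vertex-deletion induction with increment $\binom{q-1}{k-1}$ cannot work.

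With Chase's theorem the proof is short. Assume $k\le a+b+1$, since otherwise both sides vanish, and let $G$ maximise the number of $K_k$.
\begin{itemize}
\item Suppose $d(v)\ge a+b+1$. By your (i), any $W\subseteq N(v)$ with $|W|=a+b+1$ consists of vertices of degree at most $a$.
\item Each vertex of $W$ lies in at most $\binom{a}{k-1}$ copies of $K_k$. So $(G-W)\cup K_{a+b+1}$, which is $S_{a,b}$-free, has at least $\binom{a+b+1}{k}-(a+b+1)\binom{a}{k-1}$ more copies of $K_k$ than $G$.
\item This quantity is positive. Indeed $\binom{a+b+1}{k}=\frac{a+b+1}{k}\binom{a+b}{k-1}$, and $\binom{a+b}{k-1}>k\binom{a}{k-1}$: each factor $\frac{a+b-i}{a-i}$ is at least $2$ for $b\ge a$, and $2^{k-1}>k$ for $k\ge 3$.
\end{itemize}
This contradicts maximality, so $\Delta(G)\le a+b$, and Chase's theorem with $D=a+b$ gives the upper bound.
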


In Theorem~\ref{thm:generalized}, an extremal graph is the vertex-disjoint union of $p$ copies of $K_{a+b+1}$ and a copy of $K_{q}$. The $k=2$ case, i.e., the Tur\'an number $\ex(n,S_{a,b})$, is still unknown except when $a=1,2$. In 2011, Sun and Wang \cite{SW2011} determined the exact value of $\ex(n,S_{1,b})$ as follows.

\begin{theorem}\label{thm:a=1} {\em(\cite{SW2011})}
Let $n,b\in \mathbb{N}$ with $n\ge b+3\ge 5$. Let $n=p(b+2)+q$, where $p,q\in \mathbb{N}$ and $0\le q\le b+1$. Then
\[
\ex(n, S_{1,b}) = 
\begin{cases}
  (p-1)\dbinom{b+2}{2} + \left\lfloor \dfrac{b(b+2+q)}{2} \right\rfloor, & 
  \textrm{if } b \ge 4 \textrm{ and } 2 \le q \le b-1; \\[1em]
  p\dbinom{b+2}{2} + \dbinom{q}{2}, &
  \textrm{otherwise}.
\end{cases}
\]
\end{theorem}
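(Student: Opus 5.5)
We prove the lower bound by constructions and the upper bound by analysing the components of an extremal graph. Throughout, $S_{1,b}$ has $b+3$ vertices. The central edge $uv$ needs $d(v)\ge b+1$, and $u$ needs a neighbour outside $N[v]$.

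\textbf{Constructions.} The ``otherwise'' value is attained by $pK_{b+2}\cup K_q$. Each component has at most $b+2<b+3$ vertices, so this graph is $S_{1,b}$-free.

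For the first case, take $(p-1)K_{b+2}$ together with a near $b$-regular graph $L$ on the remaining $b+2+q$ vertices. Every vertex of $L$ has degree at most $b$, so no vertex can play the role of $v$, and the union is $S_{1,b}$-free. Its size is $(p-1)\binom{b+2}{2}+\lfloor b(b+2+q)/2\rfloor$. A near $b$-regular graph on $b+2+q\ge b+1$ vertices exists by the standard construction.

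\textbf{Structure of a component.} Let $G$ be $S_{1,b}$-free and let $C$ be a component. Call $v$ \emph{big} if $d(v)\ge b+1$. If $v$ is big and $u\in N(v)$, then $u$ has no neighbour outside $N[v]$; otherwise choosing $b$ leaves in $N(v)\setminus\{u\}$ gives a copy of $S_{1,b}$. So $N[v]$ is closed under taking neighbours, hence $N[v]=V(C)$. Then $|C|=d(v)+1$, and $d(v)\le |C|-1$.

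Suppose further that $|C|\ge b+3$. Then $d(v)\ge b+2$. For any $u\ne v$, since $u$ has no neighbour outside $N[v]$ and $|N(v)\setminus\{u\}|\ge b+1$, the same argument works with any vertex of $N(v)$ as the leaf. This forces $u$ to have no neighbours outside $\{v\}$, except that $u$ may still be adjacent to vertices inside $N(v)$. Redoing the count, if $u\in N(v)$ has a neighbour $w\in N(v)$, then $u$–$v$ with leaf $w$ attached to $u$ and $b$ further leaves from $N(v)\setminus\{u,w\}$ (which has size $\ge b$) is a copy of $S_{1,b}$. So $N(v)$ is independent, $C$ is a star, and $e(C)=|C|-1$.

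Consequently every component $C$ satisfies one of the following:
\begin{enumerate}[(a)]
\item $|C|\le b+2$, so $e(C)\le\binom{|C|}{2}$;
\item $C$ has maximum degree at most $b$, so $e(C)\le\lfloor b|C|/2\rfloor$;
\item $C$ is a star, whose edge count is dominated by (b) once $|C|\ge b+3$ and $b\ge 2$.
\end{enumerate}

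\textbf{Optimisation.} We now maximise $\sum e(C)$ over partitions of $n$ into parts. Each part of size $s$ contributes either $\binom{s}{2}$ with $s\le b+2$, or $\lfloor bs/2\rfloor$ with $s\ge b+1$.

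Merging parts of type (b) only helps, up to the floor, so at most one part is of type (b). Among type (a) parts, convexity of $\binom{s}{2}$ says it is best to use as many full $K_{b+2}$'s as possible plus a remainder. A $K_{b+2}$ has $\binom{b+2}{2}=(b+2)(b+1)/2>b(b+2)/2$ edges, so converting a type (b) part into $K_{b+2}$'s wherever sizes allow is profitable.

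The comparison therefore reduces to $\binom{q}{2}+\binom{b+2}{2}$ versus $\lfloor b(b+2+q)/2\rfloor$, that is, absorbing the remainder $q$ into one regular block. We also need to check that no other split does better, for example a type (b) block of size $b+2+q+j(b+2)$, or leaving two small cliques. Each of these loses by linearity of $bs/2$ against $\binom{b+2}{2}$.

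The main obstacle is this final comparison. We need to verify that $\lfloor b(b+2+q)/2\rfloor>\binom{b+2}{2}+\binom{q}{2}$ exactly when $b\ge4$ and $2\le q\le b-1$. The difference is
\[
\tfrac{(q-1)(b-q)}{2}-\tfrac{b+2}{2},
\]
before taking the floor, and this needs care at the boundary values together with the parity floor. Perhaps the correct characterisation involves a $\ge$ comparison, with ties resolved by the ``otherwise'' formula giving the same value. The hypothesis $n\ge b+3$ ensures $p\ge1$, so the $(p-1)$ term makes sense.
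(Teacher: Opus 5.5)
The paper only cites this result from Sun and Wang and gives no proof, so I am judging your argument on its own.

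Your structural step is correct and well chosen. In an $S_{1,b}$-free graph, a vertex $v$ with $d(v)\ge b+1$ forces its component to equal $N[v]$, and if that component has at least $b+3$ vertices it is a star. Hence every component of order $s$ has at most $g(s)$ edges, where $g(s)=\binom{s}{2}$ for $s\le b+2$ and $g(s)=\lfloor bs/2\rfloor$ for $s\ge b+3$. Each such value is attained. So $\ex(n,S_{1,b})$ is exactly the maximum of $\sum g(s_i)$ over partitions of $n$.

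Your reductions (merging large parts, convexity for the cliques, splitting a $K_{b+2}$ off a large part of order at least $2b+5$) are sound. Two configurations are not explicitly handled:
\begin{enumerate}
\item A large part of order $L$ together with a leftover clique of order $r\le b+1$. These can be merged, since $\lfloor b(L+r)/2\rfloor-\lfloor bL/2\rfloor\ge\lfloor br/2\rfloor\ge\binom{r}{2}$.
\item For $q=0$, a large part of order $2b+4$. This loses to $2K_{b+2}$ because $b(b+2)<(b+2)(b+1)$.
\end{enumerate}
After this, the only candidates are $pK_{b+2}\cup K_q$ and, for $q\ge 1$, $(p-1)K_{b+2}$ plus a near $b$-regular graph on $b+2+q$ vertices.

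The genuine gap is the final comparison, which is what produces the case split in the statement. You leave it unresolved, and the difference you display is wrong. The correct identity is
\[
\frac{b(b+2+q)}{2}-\binom{b+2}{2}-\binom{q}{2}=\frac{(q-1)(b-q)-2}{2},
\]
whereas your expression is smaller by $b/2$. With your formula, $b=10$, $q=2$ would land in the ``otherwise'' branch. In fact $\lfloor 10\cdot 14/2\rfloor=70>67=\binom{12}{2}+\binom{2}{2}$.

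With the correct identity the verification is immediate:
\begin{enumerate}
\item If $b\ge 4$ and $2\le q\le b-1$, concavity in $q$ gives $(q-1)(b-q)\ge b-2\ge 2$. So the unfloored difference is nonnegative, and since $\binom{b+2}{2}+\binom{q}{2}$ is an integer, the floor is at least it. Ties such as $b=4$, $q\in\{2,3\}$ are harmless, because both formulas then coincide.
\item For every other $q\ge 1$, that is $q\in\{1,b,b+1\}$ or $b\le 3$, one has $(q-1)(b-q)\le 1$. So the difference is at most $-1/2$ and $pK_{b+2}\cup K_q$ wins strictly.
\end{enumerate}
Supplying this computation, together with the two omitted configurations, completes your proof.
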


In Theorem \ref{thm:a=1}, if $q$ is close to $0$ or $b+1$, then the value of $\ex(n,S_{1,b})$ is attained by the graph $pK_{b+2}\cup K_{q}$. Otherwise, it is attained by $(p-1) K_{b+2}\cup L$, where $L$ is a near $b$-regular graph on $b+2+q$ vertices. Subsequently, Sun, Wang and Wu \cite{SWW2015} studied the function $\ex(n,S_{2,b})$ and obtained a similar result.

\begin{theorem}\label{thm:a=2} {\em(\cite{SWW2015})}
 Let $n,b\in \mathbb{N}$ with $n\ge b+3\ge 4$. Let $n=p(b+3)+q$, where $p,q\in \mathbb{N}$ and $0\le q\le b+2$. Then
\[
\ex(n, S_{2,b}) = 
\begin{cases}
  (p-1)\dbinom{b+3}{2} + \left\lfloor \dfrac{b(b+3+q)}{2} \right\rfloor, & 
\begin{aligned}
  &\textrm{if } b \geq 12 \textrm{ and } 3 \leq q \leq b-2, \textrm{ or if}\\
  &9 \leq b \leq 11 \textrm{ and } 4 \leq q \leq b-3;
\end{aligned} \\[1em]
  p\dbinom{b+3}{2} + \dbinom{q}{2}, & \textrm{otherwise}.
\end{cases}
\]
\end{theorem}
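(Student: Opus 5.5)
The plan is to prove both the upper bound and its sharpness. The lower bounds come straight from constructions. The graph $pK_{b+3}\cup K_q$ is $S_{2,b}$-free, since every component has at most $b+3$ vertices while $S_{2,b}$ has $b+4$. The graph $(p-1)K_{b+3}\cup L$ is also $S_{2,b}$-free, where $L$ is a near $b$-regular graph on $b+3+q$ vertices. To see this, suppose $L$ contained $S_{2,b}$ with central edge $uv$ and $v$ the vertex carrying the $b$ leaves. Then $d_L(v)\ge b+1$, which is impossible. So all the work is in the upper bound, and I would argue by induction on $n$ (equivalently on $p$), with base case $n\le b+3$, where $\binom{n}{2}$ is trivially best.

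For the upper bound, let $G$ be an extremal $S_{2,b}$-free graph on $n$ vertices. The key structural fact is this: if $uv\in E(G)$ with $d(v)\ge b+1$, then $N(u)\setminus\{v\}$ must lie almost entirely inside $N(v)$. Precisely, there are at most $1$ neighbour of $u$ outside $N[v]$ when $d(v)=b+1$, and none when $d(v)\ge b+2$; otherwise we could choose two leaves at $u$ and $b$ leaves at $v$. Consequently, a vertex $v$ of degree at least $b+2$ forces every neighbour of $v$ to have its whole neighbourhood inside $N[v]$. Hence $N[v]$ is a union of components, and the component of $v$ has at most $b+3$ vertices, because a vertex of degree $\ge b+3$ would allow two leaves at any neighbour. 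So every component $C$ either has maximum degree at most $b+1$ or has $|C|\le b+3$.

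Components with $|C|\le b+3$ contribute at most $\binom{|C|}{2}$ edges. The remaining components satisfy $\Delta\le b+1$, and here a finer bound is needed. Degree-$(b+1)$ vertices are heavily restricted: each neighbour $u$ of such a $v$ has at most one neighbour outside $N[v]$. Counting edges around a degree-$(b+1)$ vertex shows that such a component is either of size at most $b+3$ or satisfies $e(C)\le \lfloor b|C|/2\rfloor$ plus a small correction. I expect to absorb that correction by comparing with the clique alternative.

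The final optimisation distributes $n$ among cliques of size $\le b+3$ and "sparse" parts of average degree $\le b$. Using convexity, at most one clique should be non-maximal, and at most one sparse part should appear, of size at least $b+4$. The answer is then the maximum of $p\binom{b+3}{2}+\binom{q}{2}$ and $(p-1)\binom{b+3}{2}+\lfloor b(b+3+q)/2\rfloor$. I would then carry out the comparison for the parameter ranges: the second expression wins exactly when $b\ge12$ and $3\le q\le b-2$, or when $9\le b\le11$ and $4\le q\le b-3$, which gives the stated thresholds.

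The main obstacle is the sparse case. I need to show that a component with $\Delta\le b+1$ and more than $b+3$ vertices has at most $\lfloor b|C|/2\rfloor$ edges, or at least is dominated by the clique configuration. My first attempt would be a discharging argument: each degree-$(b+1)$ vertex $v$ forces many non-edges between $N(v)$ and the outside, and those can be charged to compensate for its excess degree.
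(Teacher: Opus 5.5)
\textbf{There is a genuine gap.} The paper only quotes this result, but its proof of the $a=3$ analogue shows what your plan is missing.

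\textbf{The sparse-component bound.} The decisive step is the one you defer: bounding a component $C$ with $\Delta(C)\le b+1$ and $|C|\ge b+4$. In the clean form you hope for, this bound is false.

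Take $n=2b+2$ with $b\ge 3$. Let $V_1=\{v_0,\dots,v_{b+1}\}$ and $V_2=\{u_1,\dots,u_b\}$. Make $v_0$ and $v_{b+1}$ adjacent to all of $V_1$. Let $\{v_1,\dots,v_b\}$ induce the complement of a Hamilton cycle, let $V_2$ induce $K_b$, and add the matching $v_iu_i$. Then $v_0,v_{b+1}$ have degree $b+1$ and every other vertex has degree $b$. So the graph is connected, has $\Delta=b+1$, and has $b^2+b+1=\lfloor bn/2\rfloor+1$ edges.

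It is $S_{2,b}$-free. The centre carrying $b$ leaves must be $v_0$ or $v_{b+1}$. Both have degree exactly $b+1$ and closed neighbourhood $V_1$. So the other centre would need two neighbours outside $V_1$, but every vertex of $V_1$ has at most one. This is the graph $H$ of Section~\ref{sec:conclu} with $a=2$, $q=b-1$.

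Its excess comes from two degree-$(b+1)$ vertices with identical closed neighbourhoods, so no discharging scheme removes it. It loses only to $K_{b+3}\cup K_{b-1}$, which has $b^2+b+4$ edges.

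A bound of the form $\lfloor b|C|/2\rfloor$ plus a small correction does not help either. The thresholds are exact, and any positive correction already overshoots at tie values such as $b=12$, $q=3$, where both formulas give $(p-1)\binom{15}{2}+108$.

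What is needed is the analogue of Lemma~\ref{lem:connected}: a connected \emph{extremal} graph on $n\ge b+4$ vertices has $n<2(b+3)$ and $\Delta=b$. This is proved by comparison with both competitors at each $q$, as in Claims~\ref{clm:dvi} and~\ref{clm:nneq}:
\begin{itemize}
\item fix $v_0$ of degree $b+1$;
\item use $t_i=d_{V_2}(v_i)\le 1$ to bound $e(G[V_1])+e(V_1,V_2)+e(G[V_2])$ in terms of $s$ and $q$;
\item show the result is below $\max\{e(K_{b+3}\cup K_q),\lfloor b(b+3+q)/2\rfloor\}$.
\end{itemize}
This must be paired with the analogue of Claim~\ref{clm:n<2(b+4)}. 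Your proposal only gestures at this part, and it is the actual content of the theorem.

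\textbf{The structural step.} Your structural step is also wrong as stated. Suppose $d(v)\ge b+2$ and $u\in N(v)$ has $N(u)=\{v,w\}$ with $w\notin N[v]$. Then the edge $uv$ supports no $S_{2,b}$, since $u$ has only one candidate leaf. So $N[v]$ need not be a component. For example, a vertex of degree $b+2$ each of whose neighbours carries a pendant vertex is $S_{2,b}$-free, in a component of order $2b+5$.

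What is true is weaker. If $d(v)\ge b+2$, any neighbour of $v$ with a neighbour outside $N[v]$ has degree at most $2$. If $d(v)\ge b+3$, every neighbour has degree at most $2$.

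The dichotomy ``$K_{b+3}$ or $\Delta\le b+1$'' therefore holds only for extremal graphs. It needs the switching argument of Lemma~\ref{lem:maxdegree}: either reattach these low-degree neighbours inside $N(v)$, or replace $a+b+1$ of them by a clique, and count the edges gained.

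Once the connected-component lemma is in place, your final assembly goes through essentially as in Section~\ref{sec:main}. That assembly uses Lemma~\ref{lem:n1n2} for small cliques and merges two large components into $K_{b+3}$ plus a near $b$-regular graph.
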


Since $\ex(n,S_{2,b})$ behaves the same as $\ex(n,S_{1,b})$, a natural problem is to determine whether the results for $a=1,2$ generalize to all $a$. In this paper, we determine the exact value of $\ex(n,S_{3,b})$ and characterize its extremal graphs, which shows that such an extension does not hold. Our proof relies on the following lemma that describes the structure of connected $S_{3,b}$-free extremal graphs. For simplicity, we define $\Ex(n, S_{a,b}):=\{G:|V(G)|=n, G\not\supset S_{a,b} \textrm{ and } e(G)=\ex(n,S_{a,b})\}$ as the set of extremal graphs on $n$ vertices.

\begin{lemma}\label{lem:connected}
Let $n,b\in \mathbb{N}$ with $b>3$ and $n\ge b+5$. Let $G\in \Ex(n,S_{3,b})$ and suppose that $G$ is connected, then $n<2(b+4)$ and
\begin{enumerate}[$(i)$]
  \item If $b\le 10$, or if $b\ge 11$ and $n\notin \{2b+1, 2b+2\}$, then $\Delta(G)=b$ and $e(G)=\left\lfloor\frac{bn}{2}\right\rfloor$.
  \item If $b\ge 11$ and $n=2b+1$, then $\Delta(G)=b+1$ and $e(G)=\left\lfloor\frac{bn+3}{2}\right\rfloor$.
  \item If $b\ge 11$ and $n=2b+2$, then $\Delta(G)=b+1$ and $e(G)=\left\lfloor\frac{bn+2+\left\lfloor\frac{b}{2}\right\rfloor}{2}\right\rfloor$.
\end{enumerate} 
\end{lemma}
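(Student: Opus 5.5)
The plan is to pin down the structure of $G$ from the $S_{3,b}$-freeness condition, then compare $e(G)$ with explicit constructions. The basic observation is the following. If $uv\in E(G)$ with $d(v)\ge b+1$ and $d(u)\ge 4$, then $S_{3,b}$ appears as soon as $u$ has $3$ neighbours outside $\{v\}$ and $v$ has $b$ neighbours outside $\{u\}$ that can be chosen disjointly. Disjointness fails only when $N(u)$ and $N(v)$ overlap heavily. So we obtain a dichotomy for every edge $uv$ with $d(v)\ge b+1$:
\begin{itemize}
\item[(a)] either $d(u)\le 3$, or
\item[(b)] $|N(u)\cup N(v)\setminus\{u,v\}|\le b+2$, i.e.\ $u$ has at most two neighbours outside $N[v]$ and $d(v)$ is small.
\end{itemize}
A sharper count shows that if $d(v)\ge b+3$, then every neighbour $u$ of $v$ has $d(u)\le 3$. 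The first step is therefore to classify the vertices of degree $\ge b+1$ ("big" vertices) and their neighbourhoods.

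\textbf{Lower bounds.} A near $b$-regular graph on $n$ vertices is $S_{3,b}$-free (its maximum degree is $b$, while the centre $v$ of $S_{3,b}$ needs degree $b+1$) and has $\lfloor bn/2\rfloor$ edges. So $e(G)\ge \lfloor bn/2\rfloor$. For $n=2b+1,2b+2$ and $b\ge 11$ we need constructions with $\Delta=b+1$ achieving the values in (ii) and (iii). The natural candidate has a few big vertices $v$ with $N[v]$ inducing a clique-like block of size $b+2$, so that (b) holds. The remaining $\approx b$ vertices form a near-$b$-regular part attached using only the two permitted "outside" neighbours per vertex. I would build these explicitly and count edges, getting the $+3$ and $+\lfloor b/2\rfloor+2$ corrections.

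\textbf{Upper bound.} Write $2e(G)=\sum d(v)=bn+\sum_v(d(v)-b)$. Let $B$ be the big vertices and $L$ the vertices of degree $<b$. We must show the excess $\sum_{v\in B}(d(v)-b)$ is compensated by the deficiency $\sum_{u\in L}(b-d(u))$, except in the cases $n\in\{2b+1,2b+2\}$. Using (a) and (b), each big vertex $v$ either forces many neighbours of degree $\le 3$, costing about $b-3$ each, far more than $d(v)-b$; or it has $N[v]$ almost closed, meaning at most two edges leave $N[v]\setminus\{v\}$ per vertex. In the closed case, $N[v]$ has $d(v)+1\ge b+2$ vertices and few edges to the rest, and then $d(v)\le b+1$, since a $K_{b+4}$-like block would itself contain $S_{3,b}$ once $d(v)\ge b+2$ with an external edge.

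The same structure gives $n<2(b+4)$. A connected extremal graph that is large could be replaced by a disjoint union containing a $K_{a+b+1}=K_{b+4}$ plus the rest. Comparing $\binom{b+4}{2}=\frac{(b+4)(b+3)}{2}$ with $\frac{b(b+4)}{2}$ gains $\frac{3(b+4)}{2}$, which beats the $O(1)$ or $O(b)$ excess available from big vertices, contradicting extremality. Once $n<2b+8$, connectivity and the almost-closed blocks force the vertices outside $N[v]$ to number roughly $n-b-2\le b+5$. These outside vertices must still have degree near $b$ while their edges into the block are limited, which is only possible in a narrow range of $n$.

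\textbf{Main obstacle.} I expect the hardest step to be the precise case analysis for $n\in\{2b+1,2b+2\}$ (and ruling out the neighbouring values of $n$). This requires an exact count of the deficiency on the complement of the blocks and a check that $b\ge 11$ is exactly the threshold where the construction beats $\lfloor bn/2\rfloor$. I would first handle a single big vertex, deriving the exact edge count in terms of $n-b-2$. Then I would show that two or more big vertices must share a block, so the excess adds at most boundedly, yielding the $+3$ and $+\lfloor b/2\rfloor+2$ terms.
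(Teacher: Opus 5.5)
\textbf{There is a genuine gap: the edge and degree bounds cannot be obtained from $S_{3,b}$-freeness and connectivity alone.}

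Your dichotomy for an edge $uv$ with $d(v)\ge b+1$ is correct; it is the paper's observation $t_i\le 2$. The trouble is that the bounds you aim for are false for connected $S_{3,b}$-free graphs in general. They hold only because $G$ is extremal among \emph{all} $n$-vertex graphs, disconnected ones included.

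Two of your intermediate claims fail outright. Take $K_{b+3}$ on $x_1,\dots,x_{b+3}$ and attach a path $x_1ww'$. This graph has $b+5$ vertices and is connected. It is $S_{3,b}$-free, since both centres would have to lie in the clique and their neighbourhoods cover at most $b+2$ further vertices. Yet $d(x_1)=b+3$ while its neighbours have degree $b+2$, and $x_2$ has degree $b+2$ although its block has an outgoing edge.

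Your target ``the excess is compensated by the deficiency unless $n\in\{2b+1,2b+2\}$'' is also false. For $n=2b+3$, let $V_1=\{v_0,\dots,v_{b+1}\}$ induce $K_{b+2}-\{v_1v_2,v_1v_3\}$, let $V_2=\{u_1,\dots,u_{b+1}\}$ induce $K_{b+1}-u_1u_2$, and add the edge $v_1u_1$. Every vertex of degree $b+1$ has closed neighbourhood $V_1$, and no vertex of $V_1$ has two neighbours outside $V_1$. So the graph is connected and $S_{3,b}$-free, with $b^2+2b-1>\lfloor bn/2\rfloor$ edges. Similarly, the construction $H_2$ makes sense for every $b\ge 5$ and beats $\lfloor bn/2\rfloor$ at $n=2b+1$, including for $b\le 10$.

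What rules these graphs out is the disconnected graph $K_{b+4}\cup K_q$, where $n=b+4+q$, and this comparison is the missing idea. The paper gets $\Delta\le b+2$ from the switching argument of Lemma~\ref{lem:maxdegree}. Claim~\ref{clm:dvi} then excludes big neighbours of $v_0$ by comparison with $K_{b+4}\cup(G-V_S)$ or $K_{b+3}\cup(G-V_S)$. Finally it shows $e(G)<\max\{e(K_{b+4}\cup K_q),\lfloor bn/2\rfloor\}$ whenever $\Delta(G)\ge b+1$ outside the exceptional $q$. In particular, the threshold $b\ge 11$ comes from the condition $q<\frac{3b+7}{4}$ in that comparison. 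It does not come from where the constructions start to beat $\lfloor bn/2\rfloor$, as you suggest.

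\textbf{The argument for $n<2(b+4)$ has the same flaw and is circular.} The total excess of a connected $S_{3,b}$-free graph is not $O(b)$. Take blocks $B_1,\dots,B_k$ with $B_i\cong K_{b+2}-\{x_iy_i,x_iz_i,y_iw_i\}$, and join them cyclically by the edges $x_iy_{i+1}$. Every vertex of degree $b+1$ has closed neighbourhood equal to its block, and no vertex has two neighbours outside its block. So the graph is connected and $S_{3,b}$-free, with excess $k(b-2)$, which is linear in $n$. A single $K_{b+4}$ gains only $\frac{3(b+4)}{2}$. Showing that an extremal connected graph has small excess is exactly what must be proved, so your step assumes its conclusion.

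The paper uses only $e(G)\le\frac{n(b+2)}{2}$ and compares with $(p-1)K_{b+4}\cup\hat G$, whose gain grows linearly in $p$; this gives $p\le 4$. When $\Delta\le b+1$ and $p\ge 2$, a direct count against the same graph finishes. When $\Delta=b+2$, the cases $p\in\{2,3,4\}$ are excluded by deleting $N[v_0]$ and bounding $e(G-N[v_0])$ by $\ex(n-b-3,S_{3,b})$, using the cases already settled on fewer vertices.

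\textbf{Parts (ii) and (iii) are not carried out.} The constructions and the matching upper bounds, which rest on the split according to whether $s\le b-1$, resp.\ $s\le\lfloor b/2\rfloor$, are only announced in your proposal.
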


Lemma~\ref{lem:connected} states that if $n\ge b+5$, then the maximum degree of a connected $S_{3,b}$-free extremal graph is $b$ or $b+1$. Based on Lemma~\ref{lem:connected}, we can prove our main result as follows.

\begin{theorem}\label{thm:main} Let $n,b\in \mathbb{N}$ with $b>3$ and $n\ge b+5$. Let $n=p(b+4)+q$, where $p,q\in \mathbb{N}$ and $0\le q\le b+3$. Then we have
\[
\ex(n, S_{3,b}) = 
\begin{cases}
  (p-1)\dbinom{b+4}{2} + \left\lfloor \dfrac{b(b+4+q)}{2} \right\rfloor, &
  \begin{aligned}
&\textrm{if } b \geq 24 \textrm{ and } 4 \leq q \leq b-4,\textrm{ or if} \\
&16 \leq b \leq 23 \textrm{ and } 5 \leq q \leq b-4,\textrm{ or if} \\
&14 \leq b \leq 15 \textrm{ and } 6 \leq q \leq b-5;\\
\end{aligned}\\
  (p-1)\dbinom{b+4}{2} + \left\lfloor \dfrac{b(2b+1)+3}{2} \right\rfloor, & \textrm{if } b\ge 22 \textrm{ and }q = b-3; \\[1em]
  (p-1)\dbinom{b+4}{2} + \left\lfloor \dfrac{b(2b+2)+2 + \left\lfloor \frac{b}{2} \right\rfloor}{2} \right\rfloor, & \textrm{if }b\ge 34 \textrm{ and } q = b-2; \\[1em]
  p\dbinom{b+4}{2} + \dbinom{q}{2}, & \textrm{otherwise}.
\end{cases}
\] 
\end{theorem}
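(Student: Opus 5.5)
We prove Theorem~\ref{thm:main} by reducing to connected components and applying Lemma~\ref{lem:connected} to each component. For the lower bound we use the constructions suggested by the formula. The first is $pK_{b+4}\cup K_q$; it is $S_{3,b}$-free since every component has at most $b+4$ vertices, while $S_{3,b}$ has $b+5$. The second is $(p-1)K_{b+4}\cup L$, where $L$ is a connected extremal graph on $b+4+q$ vertices. This requires $b+4+q\ge b+5$, i.e.\ $q\ge 1$, and $b+4+q<2(b+4)$, which holds automatically. The edge count of $L$ is given by Lemma~\ref{lem:connected}: it equals $\lfloor b(b+4+q)/2\rfloor$ in general, the value in (ii) when $b+4+q=2b+1$ (that is, $q=b-3$), and the value in (iii) when $q=b-2$. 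So $\ex(n,S_{3,b})$ is at least the maximum of these candidates.

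For the upper bound, take $G\in\Ex(n,S_{3,b})$ and decompose it into components. A component with at most $b+4$ vertices has at most $\binom{m}{2}$ edges. A component on $m\ge b+5$ vertices that is itself extremal for its order (which we may assume, since otherwise we replace it) satisfies $m<2(b+4)$ by Lemma~\ref{lem:connected}, and has $f(m)$ edges, where $f(m)$ is given by (i)--(iii). We must also handle a component on $m\ge b+5$ vertices that is $S_{3,b}$-free but whose edge count is below $\ex(m,S_{3,b})$. Such a component can be replaced by an extremal graph on $m$ vertices, which in turn splits (after possibly recursing) into pieces that are either small cliques or connected graphs on fewer than $2(b+4)$ vertices. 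Hence it suffices to maximise
\[
\sum_i g(m_i) \quad\text{subject to}\quad \sum_i m_i=n,
\]
where $g(m)=\binom{m}{2}$ for $m\le b+4$, $g(m)=f(m)$ for $b+5\le m\le 2b+7$, and $g(m)$ is taken as the best split for larger $m$.

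The optimisation proceeds by exchange arguments. First, since $\binom{m}{2}$ is convex, any two small components of sizes $m_1,m_2\le b+4$ can be merged or rebalanced, up to the cap $b+4$, without losing edges. Second, two large components (each of size at least $b+5$) are never optimal: replacing sizes $m_1+m_2$ by $b+4$ and $m_1+m_2-b-4$ gains edges, because $\binom{b+4}{2}-b(b+4)/2=2(b+4)$ exceeds the small correction terms in $f$. Third, a large component together with a small component of size $s<b+4$ can be rebalanced toward $K_{b+4}$ plus a remainder. Consequently the optimum has at most one component that is not a copy of $K_{b+4}$, or it has one large component of size $b+4+q$ together with $p-1$ copies of $K_{b+4}$. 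It remains to compare
\[
\binom{q}{2} \quad\text{with}\quad f(b+4+q)-\binom{b+4}{2}.
\]

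The main work is this comparison. In case (i) the difference is
\[
\Big\lfloor \tfrac{b(b+4+q)}{2}\Big\rfloor-\binom{b+4}{2}-\binom{q}{2}
=\tfrac12\big(bq-4b-12-q^2+q\big)-\text{(rounding)},
\]
and this is positive exactly when $q^2-(b+1)q+4b+12<0$. Solving this quadratic, and handling the floor when $bq$ is odd, gives the thresholds $b\ge 24$ with $4\le q\le b-4$, $16\le b\le 23$ with $5\le q\le b-4$, and $14\le b\le 15$ with $6\le q\le b-5$. Note that for $b\le 10$ Lemma~\ref{lem:connected} only provides case (i).

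Cases (ii) and (iii) give the extra terms $+3/2$ and $(2+\lfloor b/2\rfloor)/2$ at $q=b-3$ and $q=b-2$, and these apply only when $b\ge 11$. Redoing the comparison at these two values of $q$ yields $b\ge 22$ and $b\ge 34$ respectively. We anticipate the main obstacle to be the careful parity and floor bookkeeping at the boundary values, together with rigorously justifying the exchange step for two large components when $f$ carries the (ii)/(iii) bonuses.
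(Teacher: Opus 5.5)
Your outline matches the paper's proof. In an extremal $G$ every component is automatically extremal for its order, so your worry about non-extremal components and the auxiliary $g$ for $m\ge 2b+8$ are unnecessary. The exchanges can be done directly on graphs: two large components become $K_{b+4}$ plus a near-$b$-regular graph, two small ones are merged via Lemma~\ref{lem:n1n2}, and then one compares $pK_{b+4}\cup K_q$ with $(p-1)K_{b+4}\cup G_t$. Note that $\binom{b+4}{2}-\frac{b(b+4)}{2}=\frac32(b+4)$, not $2(b+4)$, though this still beats the bonuses. The paper stops at this maximum, so the explicit comparison is where you would add something, but as written it is wrong. Since $(b+4)(b+3)=b^2+7b+12$,
\[
\frac{b(b+4+q)}{2}-\binom{b+4}{2}-\binom{q}{2}=\frac12\bigl(bq-3b-12-q^2+q\bigr),
\]
so the relevant quadratic is $q^2-(b+1)q+3b+12$, not $q^2-(b+1)q+4b+12$.

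Your version equals $24$ at $q=4$ for every $b$, so the case $b\ge 24$, $q=4$ could never occur. For $b=24$ it gives $6\le q\le 19$ instead of $4\le q\le 20$, so solving it does not produce the stated thresholds. The correct quadratic takes the values $24-b$ at $q=4$, $32-2b$ at $q\in\{5,b-4\}$, and $42-3b$ at $q\in\{6,b-5\}$. It factors as $(q-6)(q-9)$ for $b=14$ and has no real roots for $b\le 13$. Together with the loss of $\frac12$ when $b$ is odd and $q$ even, this gives the first case.

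The boundaries are ties, not sign changes. The difference is $0$ at, e.g., $(b,q)=(24,4),(16,5),(14,6)$. At $q=b-3$ you compare $b^2+\lfloor\frac{b+3}{2}\rfloor$ with $\binom{b+4}{2}+\binom{b-3}{2}=b^2+12$: these are equal for $b\in\{21,22\}$ and strictly larger from $b=23$. At $q=b-2$ you compare $b^2+b+1+\lfloor b/4\rfloor$ with $b^2+b+9$: equal for $32\le b\le 35$, strictly larger from $b=36$. So ``positive exactly when'' is the wrong criterion, and the comparison does not ``yield'' $22$ and $34$. What you must check is that the stated formula is $\ge$ its competitor on each stated range and $\le$ outside it. This holds because those thresholds sit inside the tie ranges.

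Two further steps need repair. First, for the lower bound you cannot take $L$ to be ``a connected extremal graph on $b+4+q$ vertices.'' Lemma~\ref{lem:connected} only describes such a graph if it exists, and for $q=1$ none exists, since it would have $\lfloor b(b+5)/2\rfloor<\binom{b+4}{2}$ edges. Use explicit $S_{3,b}$-free graphs instead:
\begin{enumerate}[$(i)$]
  \item a near-$b$-regular graph on $b+4+q$ vertices (any graph with $\Delta\le b$ is $S_{3,b}$-free);
  \item the graphs $H_2$ and $H_3$ built in the proofs of Claims~\ref{clm:n=2b+1} and~\ref{clm:n=2b+2}.
\end{enumerate}

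Second, your third exchange, ``rebalance toward $K_{b+4}$ plus a remainder'' on the leftover vertices, fails when the small component has $s\le b$ vertices. Take $b=40$, $s=1$, $n_t=64$: any $K_{44}\cup X$ with $|X|=21$ has at most $946+210=1156$ edges, below $f(64)=1280$. The remainder is too small to be near-$b$-regular. The paper handles $s\le b$ by merging both components into a single near-$b$-regular graph on $s+n_t$ vertices, using $\binom{s}{2}\le\frac{b(s-1)}{2}$ against a bonus of at most $\frac{2+\lfloor b/2\rfloor}{2}$. It splits off $K_{b+4}$, with a near-$b$-regular remainder on $s+n_t-b-4\ge b+2$ vertices, only when $b+1\le s\le b+3$.
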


In Theorem~\ref{thm:main}, if $q$ is close to $0$ or $b+3$, then the value of $\ex(n,S_{3,b})$ is attained by the graph $pK_{b+4}\cup K_{q}$. If $b\ge 22$ and $q=b-3$, then it is attained by the union of $(p-1)K_{b+4}$ and a graph on $2b+1$ vertices with maximum degree $b+1$. If $b\ge 34$ and $q=b-2$, then it is attained by the union of $(p-1)K_{b+4}$ and a graph on $2b+2$ vertices with maximum degree $b+1$. 
Otherwise, it is attained by a near $b$-regular graph on $b+4+q$ vertices. Note that part of the results for $q=b-3$ and $q=b-2$ differ from those in Theorems~\ref{thm:a=1},~\ref{thm:a=2}. This implies that the connected components with more than $b+4$ vertices in extremal graphs are not all near $b$-regular graphs.

The rest of this paper is organized as follows. In Section~\ref{lemmas}, we introduce several lemmas that will be used in the following proofs. And we prove Lemma~\ref{lem:connected} in Section~\ref{sec:con}. Based on Lemma~\ref{lem:connected}, we prove our main result in Section~\ref{sec:main}. In Section~\ref{sec:conclu}, we present some preliminary results and insights on the function $\ex(n,S_{a,b})$ for general $a$.

\section{Preliminaries}\label{lemmas}
In this section, we show some lemmas that will be used in the proofs of Lemma~\ref{lem:connected} and Theorem~\ref{thm:main}. We first introduce the result of $\ex(n, K_{1,b})$, which is related to the function $\ex(n, S_{a,b})$.

\begin{lemma}\label{lem:K1b}{\em(\cite{SW2011})}
Let $n,b\in \mathbb{N}$ with $n\ge b\ge 1$. Then $\ex(n, K_{1,b})=\left\lfloor\frac{(b-1)n}{2}\right\rfloor$.
\end{lemma}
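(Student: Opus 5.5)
The plan is to prove matching upper and lower bounds, writing $r=b-1$ throughout.

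\textbf{Upper bound.} A graph $G$ is $K_{1,b}$-free exactly when $\Delta(G)\le b-1=r$. Summing degrees gives $2e(G)\le rn$, so $e(G)\le\lfloor rn/2\rfloor$.

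\textbf{Lower bound.} It remains to build a near $r$-regular graph on $n$ vertices whenever $n\ge r+1$. Any such graph has maximum degree $r$ and exactly $\lfloor rn/2\rfloor$ edges. The case $r=0$ is the empty graph, so assume $r\ge1$. I split according to the parities of $r$ and $n$.

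\emph{Case: $r$ even.} Take the circulant graph on $\mathbb{Z}_n$ in which $i\sim i\pm j$ for $1\le j\le r/2$. Since $n\ge r+1$, the $r$ differences $\pm1,\dots,\pm r/2$ are distinct and nonzero modulo $n$. Hence the graph is simple and $r$-regular.

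\emph{Case: $r$ odd and $n$ even.} Take the circulant with differences $\pm1,\dots,\pm (r-1)/2$, and add the perfect matching $i\sim i+n/2$. From $n\ge r+1$ we get $n/2\ge (r+1)/2>(r-1)/2$. So these new edges are distinct from the existing ones, and the graph is $r$-regular.

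\emph{Case: $r$ odd and $n$ odd.} Here $rn$ is odd, and parity together with $n\ge r+1$ forces $n\ge r+2$. Apply the previous case to $n-1$ vertices, which is even and at least $r+1$, to get an $r$-regular graph $H$. The Hamiltonian cycle $0,1,\dots,n-2$ lies in $H$, and along it choose the $(r-1)/2$ disjoint edges $\{0,1\},\{2,3\},\dots,\{r-3,r-2\}$; this is possible because $r-1\le n-1$. Delete these edges and add a new vertex adjacent to their $r-1$ endpoints. Every old vertex keeps degree $r$ and the new vertex has degree $r-1$, so the edge count is $(rn-1)/2=\lfloor rn/2\rfloor$.

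Together the two bounds give $\ex(n,K_{1,b})=\lfloor (b-1)n/2\rfloor$.

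The only delicate point is checking that the circulant differences do not collide modulo $n$, and that in the odd–odd case the condition $n-1\ge r+1$ actually holds. Both follow directly from $n\ge b=r+1$ together with parity.
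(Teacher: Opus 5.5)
Your proof is correct. The paper gives no proof of this lemma. It quotes it from Sun and Wang \cite{SW2011} and uses it (with $b+1$ in place of $b$) only to guarantee that near $b$-regular graphs such as $H_1$ and $\hat G$ exist. Your argument is the standard proof of the cited fact and supplies what the paper takes for granted: the degree-sum bound $2e(G)\le (b-1)n$, plus an explicit circulant construction of a near $(b-1)$-regular graph for every $n\ge b$.

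One small imprecision: in the case $r$ odd, $n$ odd with $r=1$, the graph $H$ from the previous case is just the perfect matching $i\sim i+\frac{n-1}{2}$. It does not contain the cycle $0,1,\dots,n-2$. This does no harm, because then $(r-1)/2=0$ edges are removed and the construction is simply $H$ plus an isolated vertex. Still, the sentence about the Hamiltonian cycle should be restricted to $r\ge 3$.
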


The following lemma is also important for our proofs.

\begin{lemma}\label{lem:n1n2}{\em(\cite{SWW2015})}
Let $n, n_{1}, n_{2}\in \mathbb{N}$ with $n_{1}, n_{2}<n-1$. Then
\[
\binom{n_1}{2} + \binom{n_2}{2} < \min\left\{\binom{n_1 + n_2}{2}, \binom{n-1}{2} + \binom{n_1 + n_2 - n + 1}{2} \right\}.
\]
\end{lemma}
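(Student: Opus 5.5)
The statement to prove is Lemma~\ref{lem:n1n2}: for $n_1,n_2<n-1$,
\[
\binom{n_1}{2}+\binom{n_2}{2}<\min\left\{\binom{n_1+n_2}{2},\ \binom{n-1}{2}+\binom{n_1+n_2-n+1}{2}\right\}.
\]
The plan is to bound the left side separately against each term of the minimum, using elementary binomial identities. The first bound should be a one-line identity; the second reduces to a convexity (``moving mass'') argument. The only care needed is with degenerate cases, where the numbers are small or $n_1+n_2-n+1$ is nonpositive.

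\textbf{First term.} I would use the identity
\[
\binom{n_1+n_2}{2}=\binom{n_1}{2}+\binom{n_2}{2}+n_1n_2 .
\]
This gives strict inequality whenever $n_1,n_2\ge 1$. I will assume $n_1,n_2\ge1$, which is implicit: if $n_1=0$, the two sides of the first comparison coincide. The inequality is only ever invoked for nonempty components.

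\textbf{Second term.} Set $s=n_1+n_2$ and assume without loss of generality that $n_1\ge n_2$.

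\emph{Case $s\le n-1$.} Here $\binom{s-n+1}{2}$ is taken as $0$, or the term is interpreted so that it is nonnegative. Since $s\le n-1$, the first bound already gives
\[
\binom{n_1}{2}+\binom{n_2}{2}<\binom{s}{2}\le\binom{n-1}{2},
\]
which suffices.

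\emph{Case $s\ge n$.} Write $m=s-n+1\ge 1$, so that $n-1=s-m$. Both pairs $(n_1,n_2)$ and $(n-1,m)$ have sum $s$. Moreover $m<n_2\le n_1<n-1$, where $m<n_2$ holds because $n_1<n-1$. The function $x\mapsto\binom{x}{2}+\binom{s-x}{2}$ is strictly convex, with increments $\binom{x+1}{2}-\binom{x}{2}=x$. Moving one unit from the smaller part to the larger part changes the value by
\[
n_1-(n_2-1)\ge 1>0 .
\]
Iterating this from $(n_1,n_2)$ until the pair reaches $(n-1,m)$ gives a strict increase, which is the required inequality.

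\textbf{Main obstacle.} The main issue is only bookkeeping in the edge cases. One must interpret $\binom{k}{2}=0$ for $k\le 1$, including possibly negative $k$ in the second term. One must also note that the strictness in the second bound needs at least one move to occur, and this is guaranteed because $n_1<n-1$.
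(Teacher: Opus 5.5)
The paper does not prove this lemma; it quotes it from \cite{SWW2015}, so there is no in-paper argument to compare against. Your proof is correct and self-contained.

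Both halves check out. For the first term, $\binom{n_1+n_2}{2}-\binom{n_1}{2}-\binom{n_2}{2}=n_1n_2$. For the second term with $n_1+n_2\ge n$, each unit move $(x,y)\to(x+1,y-1)$ from $(n_1,n_2)$ toward $(n-1,m)$ gains $x-y+1\ge 1$. You wrote only the first increment $n_1-n_2+1$, but later ones are larger, and there are $n-1-n_1\ge 1$ moves. Adding up these increments gives the closed form
\[
\binom{n-1}{2}+\binom{n_1+n_2-n+1}{2}-\binom{n_1}{2}-\binom{n_2}{2}=(n-1-n_1)(n-1-n_2),
\]
valid whenever $n_1+n_2\ge n-1$. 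This would replace the iteration by a single line; otherwise the two arguments are the same computation.

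Your caveat about $n_1,n_2\ge 1$ is a real restriction, not pedantry. The paper's $\mathbb{N}$ contains $0$ (it writes $p,q\in\mathbb{N}$ with $0\le q$). For $n_1=0$ the first comparison is an equality, so the lemma as printed fails in that degenerate case. Every application in the paper is to two nonempty components, so this causes no harm downstream.
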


The following lemma establishes bounds on the maximum degree of extremal graphs and characterizes the component containing a vertex that attains the maximum degree.

\begin{lemma}\label{lem:maxdegree}
Let $a,b\in \mathbb{N}$ and $a<b$. Let $n=p(a+b+1)+q$, where $p,q\in \mathbb{N}$ and $0\le q\le a+b$. Suppose $G\in \Ex(n,S_{a,b})$, then we have
\begin{enumerate}[$(i)$]
  \item $b\le \Delta(G)\le a+b$.
  \item If there exists a vertex $u\in V(G)$ such that $d_{G}(u)=a+b$ and suppose $H$ is the component of $G$ containing $u$, then $H=K_{a+b+1}$.
\end{enumerate}
\end{lemma}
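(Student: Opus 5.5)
We must prove Lemma~\ref{lem:maxdegree}: $b\le \Delta(G)\le a+b$ for $G\in\Ex(n,S_{a,b})$, and any component containing a vertex of degree $a+b$ is $K_{a+b+1}$. The plan is to compare $e(G)$ with explicit constructions for the lower bound, and to use local switching arguments around a high-degree vertex for the upper bound and for part (ii).

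For the lower bound in (i), suppose $\Delta(G)\le b-1$. Then $G$ is $K_{1,b+1}$-free, so by Lemma~\ref{lem:K1b} (applied with $b+1$ in place of $b$, valid when $n\ge b+1$) we get $e(G)\le\lfloor bn/2\rfloor$. We want a strictly larger construction. Take $pK_{a+b+1}\cup K_q$. It is $S_{a,b}$-free, since every component has at most $a+b+1$ vertices while $S_{a,b}$ has $a+b+2$. Its edge count is $p\binom{a+b+1}{2}+\binom q2$. Each vertex in a $K_{a+b+1}$ has degree $a+b\ge b+1$, so once $p\ge1$ this beats $\lfloor (b-1)n/2\rfloor$. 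The comparison needs care when $q$ is large: the $K_q$ part has average degree $q-1\le a+b-1$, which may still be below $b-1$. A cleaner route is to note directly that $\Delta(G)\le b-1$ makes $G$ maximal-failing. Any vertex $u$ of degree $\le b-1$ can receive an edge to a non-neighbour $w$ of degree $\le b-1$ (such $w$ exists unless everything is nearly complete) without creating an $S_{a,b}$. The reason is that the new edge would have to serve as the center or a leaf edge, and either way one endpoint needs degree $\ge b+1$, impossible after adding one edge to degrees $\le b-1$. Checking this carefully gives a contradiction with extremality. When $n$ is small (that is, $n\le a+b+1$), $G=K_n$ is extremal and the claim is read with $\Delta=n-1$, which is handled separately.

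For the upper bound in (i), suppose $d(u)\ge a+b+1$. Every neighbour $v$ of $u$ then has $d(v)\le a$: otherwise $uv$ is a central edge with $a$ leaves at $v$ and $b$ at $u$, using $d(u)-1\ge a+b$ to pick them disjointly. Let $H$ be the component of $u$. Delete all edges at $u$ and the edges from $N(u)$, and rebuild using the freed vertex set; concretely, we compare $e(G)$ with the graph obtained by replacing $G[N[u]]$ by cliques of size $a+b+1$. The edges incident to $N[u]$ number at most $d(u)+d(u)(a-1)=a\,d(u)$, roughly $a$ per vertex, whereas cliques $K_{a+b+1}$ give $(a+b)/2>a$ per vertex since $a<b$. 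Leftover vertices are placed in a smaller clique, and Lemma~\ref{lem:n1n2} is used to merge clique pieces. The result is a strictly larger $S_{a,b}$-free graph, a contradiction.

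For (ii), let $d(u)=a+b$. Again any neighbour $v$ with $d(v)\ge a+1$ must have all of its other neighbours inside $N[u]$ (else we get leaves outside), and more strongly, any neighbour of $v$ outside $N[u]$ gives $S_{a,b}$ when $d(v)\ge a+1$. So the edges leaving $N[u]$ come only from low-degree vertices. We then replace $G[N[u]]$ by $K_{a+b+1}$ and delete the edges from $N[u]$ to the outside. This is $S_{a,b}$-free, and it does not lose edges: a vertex with outside edges has degree $\le a$, so what it loses is at most its missing internal edges, up to careful counting. Strictness of this comparison forces $N[u]$ to be a clique with no outside edges, so $H=K_{a+b+1}$. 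The main obstacle is the counting in this exchange step, i.e. showing that the deleted cross edges never exceed the added internal edges. I would bound each boundary vertex $v$ with $d(v)\le a$ by noting that it has at most $d(v)-1<a+b-1$ outside edges while missing $\ge$ that many internal edges.
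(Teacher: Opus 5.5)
\paragraph{The gap: the upper bound $\Delta(G)\le a+b$.}
Your observation that every neighbour of $u$ has degree at most $a$ is correct. The exchange you build on it, however, does not always gain edges.

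You delete everything meeting $N[u]$ (up to $a\,d(u)$ edges) and rebuild all $d(u)+1$ vertices as copies of $K_{a+b+1}$ plus one leftover clique $K_r$. The heuristic ``$(a+b)/2>a$ edges per vertex'' holds for the full cliques. It fails for $K_r$ when $r\le 2a$, and the surplus from the full cliques need not cover that deficit.

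Concrete example: take $a=4$, $b=5$, $d(u)=13$, and let every neighbour of $u$ have three private neighbours outside $N[u]$. This configuration is locally $S_{4,5}$-free. Exactly $13+39=52$ edges meet $N[u]$, while $K_{10}\cup K_4$ has only $45+6=51$ edges, so no contradiction arises.

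In general, with $b=a+1$ and $d(u)=2a+1+r$, the net change is $a+1+\frac{(r-1)(r-2a)}{2}$. This equals $a+1-\frac{a(a-1)}{2}<0$ for $r\in\{a,a+1\}$ as soon as $a\ge 4$. The numbers happen to close for $a\le 3$, but the lemma is stated for all $a<b$ and is used that way in Theorem~\ref{thm:general}. Lemma~\ref{lem:n1n2} cannot repair this, because it only compares clique partitions with one another, not with $a\,d(u)$.

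The missing idea is to leave $u$ and the rest of $G$ untouched:
\begin{enumerate}[$(1)$]
\item Pick exactly $a+b+1$ neighbours of $u$.
\item Delete them, losing at most $a(a+b+1)$ edges, since each has degree at most $a$.
\item Put a $K_{a+b+1}$ on them, gaining $\frac{(a+b+1)(a+b)}{2}>a(a+b+1)$ because $b>a$.
\end{enumerate}
The new graph is $S_{a,b}$-free, since the untouched part is a subgraph of $G$ and the new component is too small to contain $S_{a,b}$. This is exactly the paper's argument.

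\paragraph{Lower bound $\Delta(G)\ge b$.}
This part is sound and takes a different, more elementary route than the paper. The paper uses $e(G)=\ex(n,S_{a,b})\ge \ex(n,K_{1,b+1})=\lfloor bn/2\rfloor$ via Lemma~\ref{lem:K1b}. You instead add an edge between two non-adjacent vertices.

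Your justification needs one correction. The reason this addition is safe is that afterwards $\Delta\le b$, while $S_{a,b}$ contains a vertex of degree $b+1$. Your stated reason, ``one endpoint needs degree $\ge b+1$'', is false when the new edge is a leaf edge at the $a$-side centre.

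Both routes need $n\ge b+1$. For $n\le b$ the bound fails outright, since then $G=K_n$.

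\paragraph{Part (ii).}
Your exchange (make $N[u]$ a clique and cut its outside edges) is essentially the paper's, but your count as written does not close. Let $v$ be a neighbour of $u$ with $o(v)\ge 1$ outside neighbours. Then $d(v)\le a$, so $v$ misses at least $b+o(v)>2o(v)$ edges inside $N[u]$. You need this factor $2$, because a non-edge between two such boundary vertices is counted at both ends. Your bound ``missing at least $o(v)$'' would only give added edges $\ge \frac12\sum o(v)$, which is less than the $\sum o(v)$ edges deleted.
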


\begin{proof}
Note that $K_{1,b+1}$ is a subgraph of $S_{a,b}$, so if $G\in \Ex(n, S_{a,b})$, then we have 
\begin{equation}
e(G)=\ex(n, S_{a,b})\ge \ex(n, K_{1,b+1})=\left\lfloor\frac{bn}{2}\right\rfloor.\label{eq:lowerbound}
\end{equation}
To show $\Delta(G)\ge b$, suppose to the contrary that $\Delta(G)\le b-1$. Then, we obtain $e(G)\le \frac{(b-1)n}{2}< \left\lfloor \frac{bn}{2}\right\rfloor$, which contradicts \eqref{eq:lowerbound}. Hence $\Delta(G)\ge b$ as desired.

If there exists a vertex $u\in V(G)$ with $d_{G}(u)\ge a+b+1$, then all its neighbors are of degree at most $a$. Otherwise, it is easy to see that $G$ must contain $S_{a,b}$. So we can find $a+b+1$ vertices each of degree at most $a$. Let ${G}'$ obtained from $G$ by deleting these $a+b+1$ vertices. Clearly, we delete at most $a(a+b+1)$ edges from $G$. Let ${G}''={G}'\cup K_{a+b+1}$. Since $a<b$, we have
\[
e(G'') \geq e(G)-a(a+b+1)+ \frac{(a+b+1)(a+b)}{2}>e(G).
\]
Note that ${G}''$ is $S_{a,b}$-free and contains more edges than $G$, a contradiction. So we have $\Delta(G)\le a+b$.

Now assume that there is a vertex $u\in V(G)$ with $d_{G}(u)=a+b$, and let $H$ be the component of $G$ containing $u$. Next we prove $H=K_{a+b+1}$. Let $X=\{v_{1},v_{2},...,v_{\l}\}$ be a subset of $N_{G}(u)$ such that for each $v_{i}\in X$, $N_{G}(v_{i})\backslash N_{G}[u]\ne \varnothing$. Note that $d_{G}(v_{i})\le a$ for all $v_{i}\in X$ since $G$ is $S_{a,b}$-free. Let $G^{*}$ be a graph obtained from $G$ by deleting all the edges between $v_{i}~(i=1, 2,..., \l)$ and $N_{G}(v_{i})\backslash u$, and adding all the edges between $v_{i}$ and $N_{G}(u)$. Clearly, we delete at most $\l(a-1)$ edges and add $\frac{\l(a+b-1) + \l(a+b-\l)}{2}$ edges. Then, we have
\[
    e(G^{*}) \geq e(G)-\l(a-1)+\frac{\l(a+b-1)+\l(a+b-\l)}{2}.
\]
Let $ f(\l)=\frac{\l(a+b-1)+\l(a+b-\l)}{2}-\l(a-1) $. If $1 \leq \l \leq a+b$, then we have $f(\l) = \frac{-\l^{2} + \l(2b + 1)}{2} > 0$, so we obtain $e(G^{*})>e(G)$, a contradiction. Therefore, we have $\l=0$, that is $N_{G}(v)\subseteq N_{G}(u)$ for all $v\in N_{G}(u)$. Since $G$ is an extremal graph, we have $H=K_{a+b+1}$. This completes the proof of Lemma~\ref{lem:maxdegree}.
\end{proof}

Lemma~\ref{lem:maxdegree} states that if $G\in \Ex(n,S_{a,b})$, then $\Delta(G)\le a+b$. Furthermore, if $n>a+b+1$ and $G$ is connected, then we have $\Delta (G)<a+b$. Now we are ready to prove Lemma~\ref{lem:connected}.

\section{Proof of Lemma~\ref{lem:connected}}\label{sec:con}
We first give a sketch of the proof. The proof proceeds mainly by contradiction. To characterize the structures of the extremal graphs, we first claim that the extremal graphs possess a certain property. Assume for contradiction that this property does not hold, then we can construct a new graph with more edges, which leads to a contradiction. Using these established properties, we are then able to determine the structures of the extremal graphs. Next, we outline the proof steps as follows.
\begin{itemize}
  \item \textbf{Maximum degree analysis.} Suppose $n<2(b+4)$ and we first prove that $\Delta(G)\le b+1$. To prove this, we assume for contradiction that $\Delta(G)\ge b+2$. Then we consider a vertex of maximum degree and use the $S_{3,b}$-free condition to bound the degrees of its neighbors. This restricts the number of edges in $G$ to be less than that of the new graph we constructed, then we get a contradiction.
  \item \textbf{Case analysis based on $b$ and $n$.} Next, we use the same methods to prove that $\Delta(G)\le b$ except the cases $b\ge 11$ and $n\in \{2b+1,2b+2\}$. Combined with $\Delta(G)\ge b$ from Lemma~\ref{lem:maxdegree}, we know that if $n<2(b+4)$, then (i) of Lemma~\ref{lem:connected} holds. For the cases $b\ge 11$ and $n\in \{2b+1,2b+2\}$, we can construct graphs with maximum degree $b+1$ and more edges than the near $b$-regular graph.
  \item \textbf{Upper bound on $n$.} Finally, we show $n<2(b+4)$ using the conditions of Lemma~\ref{lem:connected}. To prove this, we again proceed by contradiction. Let $n=p(b+4)+q$, where $p,q\in \mathbb{N}$ and $0\le q\le b+3$. We first prove $p\le 4$, and then show that $p\ne 2,3,4$.
\end{itemize}
Next we prove Lemma~\ref{lem:connected}. Since $n\ge b+5$ and $G$ is connected, it follows from Lemma~\ref{lem:maxdegree} that $b\le \Delta(G)\le b+2$.
Suppose $v_{0}\in V(G)$ and $d_{G}(v_{0})=\Delta(G)$. Let $N_{G}(v_{0})=\{v_{1},v_{2},...,v_{\Delta(G)}\}$, $V_{1}=N_{G}[v_{0}]$ and $V_{2}=V(G)\setminus V_{1}$. Let $u_{1},u_{2},...,u_{t}~(t\ge 1)$ be all the vertices such that $d(u_{i},v_{0})=2$, and $v_{1},v_{2},...,v_{s}~(1\le s\le \Delta(G))$ be all the vertices in $N_{G}(v_{0})$ adjacent to some vertices in $\{u_{1},u_{2},...,u_{t}\}$. For $i\in \{1,2,...,s\}$, define $t_{i}=d_{V_{2}}(v_{i})$. Since $G$ is $S_{3,b}$-free, we know that $1\le t_{i}\le 2$ for each $i$. For any subset $S$ of $\{u_{1},u_{2},...,u_{t}\}$, define $V_{S}=V_{1}\cup S$.

\begin{claim}\label{clm:dvi}
If $\Delta(G)=b+1$ or $b+2$, then $d_{G}(v_{i})\le b$ for all $i\in \{1,2,...,s\}$.   
\end{claim}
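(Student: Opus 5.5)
The plan is to argue by contradiction, first trying to build a copy of $S_{3,b}$ directly and only then using extremality of $G$. Suppose $\Delta(G)\in\{b+1,b+2\}$ and some $v_i$ with $1\le i\le s$ has $d_G(v_i)\ge b+1$. Then $v_i$ is adjacent to $v_0$, to $t_i\in\{1,2\}$ vertices of $V_2$ (among them some $u_j$), and to at least $b-t_i\ge b-2$ vertices of $N_G(v_0)\setminus\{v_i\}$.

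I will use the standard greedy criterion for embedding a double star on a fixed edge $xy$. A copy of $S_{3,b}$ with central edge $xy$, $3$ leaves at $x$ and $b$ leaves at $y$, exists as soon as three conditions hold:
\[
|N(x)\setminus\{y\}|\ge 3,\qquad |N(y)\setminus\{x\}|\ge b,\qquad |(N(x)\cup N(y))\setminus\{x,y\}|\ge b+3 .
\]
To see this, first assign the private neighbours of each centre to that centre, then distribute the common neighbours.

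I will apply this to the edge $v_0v_i$ in both orientations. Since $d(v_0)-1\ge b$ and $d(v_i)-1\ge b$, the degree conditions hold either way round. So the only obstruction is the union condition
\[
|N(v_0)\cup N(v_i)\setminus\{v_0,v_i\}| = (\Delta(G)-1)+t_i+|N(v_i)\cap V_1\setminus N[v_0]|\ \ge b+3 .
\]
Here the last term is $0$, because every vertex of $V_1$ other than $v_0$ lies in $N(v_0)$. The union therefore has size $\Delta(G)-1+t_i$. This is at least $b+3$ exactly when $\Delta(G)=b+2$ and $t_i=2$, and in that case freeness alone gives the contradiction.

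The main obstacle is the remaining tight cases: $\Delta(G)=b+1$, or $\Delta(G)=b+2$ with $t_i=1$. In these cases $N(v_i)$ is almost contained in $N[v_0]$. Precisely, $v_i$ misses at most $\Delta(G)-1-(d(v_i)-1-t_i)\le 2$ vertices of $N(v_0)\setminus\{v_i\}$.

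For these cases I would first try other central edges. One option is $v_iw$ with $w\in N(v_0)\cap N(v_i)$: every such $w$ has $d(w)\le b$ or else the same counting applies. Another option is $v_iu_j$, which forces $d(u_j)$ to be small. Then I would switch to an edge-exchange argument in the spirit of Lemma~\ref{lem:maxdegree}(ii). Delete the $t_i\le 2$ edges from $v_i$ to $V_2$. In exchange, add the missing edges inside $V_1$, or edges from the low-degree vertices $u_j$ to other low-degree vertices. I would check that the new graph $G^{*}$ is still $S_{3,b}$-free and satisfies $e(G^{*})>e(G)$, or that it keeps $e(G)$ but contains a vertex of degree $a+b=b+3$ in a component other than $K_{b+4}$. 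Either outcome contradicts $G\in\Ex(n,S_{3,b})$ or Lemma~\ref{lem:maxdegree}(ii).

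The delicate point is certifying that $G^{*}$ stays $S_{3,b}$-free. For this I would use that after the switch, $V_1$ is nearly a clique on $\Delta(G)+1\le b+3$ vertices with few outside edges.
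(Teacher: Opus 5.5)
\textbf{There is a genuine gap.} Your double-star criterion is correct, and so is the count $|N(v_0)\cup N(v_i)\setminus\{v_0,v_i\}|=\Delta(G)-1+t_i$. But together they only settle the subcase $\Delta(G)=b+2$, $t_i=2$; this is the paper's remark that $t_i=2$ forces $d_{V_1}(v_i)=1$ there. Every other case, which is the real content of the claim, is left to an exchange argument you never specify, and the swap you sketch does not work.

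\begin{itemize}
\item When $\Delta(G)=b+1$, your own count shows that $v_i$ misses at most $b+1-d(v_i)+t_i\le t_i$ vertices of $N(v_0)\setminus\{v_i\}$. So deleting its $t_i$ edges into $V_2$ and adding the missing edges at $v_i$ cannot increase $e(G)$. Also, no degree rises above $b+2$, so the fallback through a vertex of degree $b+3$ and Lemma~\ref{lem:maxdegree}(ii) is unavailable.
\item When $\Delta(G)=b+2$ and $t_i=1$, the swap can gain an edge, but $S_{3,b}$-freeness does not pass from $G$ to $G^{*}$. Suppose one of the two vertices $w$ missed by $v_i$ has two neighbours $x,y\in V_2$, so that $N(w)=\{v_0,x,y\}$. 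After the swap $N(v_i)=N[v_0]\setminus\{v_i\}$. Then $v_iw$ is the central edge of an $S_{3,b}$, with leaves $x,y,v_0$ at $w$ and the $b$ vertices of $N(v_0)\setminus\{v_i,w\}$ at $v_i$.
\end{itemize}
You flag this certification as the delicate point, but it is exactly where the proof is missing.

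\textbf{The missing idea} is to use extremality globally by completing a whole neighbourhood into a clique, rather than by a local swap.
\begin{itemize}
\item Let $u_1\in V_2$ be a neighbour of $v_j$. Set $V_S=V_1\cup S$, where $S=\{u_1\}$, or $S$ is a suitable pair of vertices of $V_2$; the pair is needed when $\Delta(G)=b+1$ and some $t_k=2$, in order to reach $b+4$ vertices.
\item Since $|V_S|\in\{b+3,b+4\}$, the graph $K_{|V_S|}\cup(G-V_S)$ is automatically $S_{3,b}$-free. No freeness verification is required.
\item Count the edges meeting $V_S$ using $d_{V_1}(v_i)\le\Delta(G)-t_i$. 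When $\Delta(G)=b+2$ and $t_i=2$, sharpen this to $d_{V_1}(v_i)=1$ (your union count); without that sharpening the count fails in that case.
\item The hypothesis $d(v_j)\ge b+1$ enters only through the central edge $u_1v_j$. This is the $v_iu_j$ idea you mention, and it forces $d_{V_2\setminus S}(u_1)\le 3$, in fact at most $2$ when $\Delta(G)=b+1$.
\item A second vertex of $S$ that is not of this type contributes at most $b+1$ further edges, counting a possible edge inside $S$.
\end{itemize}
In every case fewer than $\binom{|V_S|}{2}$ edges meet $V_S$. Hence $e(K_{|V_S|}\cup(G-V_S))>e(G)$, contradicting $G\in\Ex(n,S_{3,b})$.
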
 

\begin{proof}
Assume that there exists $j\in \{1,2,...,s\}$ such that $d_{G}( v_{j})\ge b+1$. Without loss of generality, assume $u_{1}$ is one of the neighbors of $v_{j}$ in $V_{2}$.

\vspace{0.3cm}

\noindent\textbf{Case 1.} $\Delta(G)=b+2$.

Let $S=\{u_{1}\}$, then $V_{S}=V_{1}\cup \{u_{1}\}$ and $|V_{S}|=b+4$. Moreover, since $d_{G}(v_{j})\ge b+1$ and $t_{j}\le 2$, we have $d_{V_{2}\setminus S}(u_{1})\le 3$, as otherwise $G$ contains an $S_{3,b}$ with central edge  $u_1v_j$. Assume there are $\l~(\l\le s\le b+2)$ of $t_{i}$ with $t_{i}=1$ and $s-\l$ of $t_{i}$ with $t_{i}=2$. Note that if $t_{i}=2$ for some $i$, then $d_{V_{1}}(v_{i})=1$, otherwise, $G$ contains an $S_{3,b}$ with central edge $v_iv_0$. Then we have
\[
\begin{cases} 
  e(G[V_{1}]) \leq \dfrac{\l(b+2-1)+(s-\l)\cdot 1 +(b+3-s)(b+2)}{2} = \dfrac{(b+3)(b+2)+\l b-s b-s}{2}; \\ 
  e(V_{1},V_{2}) = \l+2(s-\l)=2s-\l; \\ 
  e(G[V_{2}]) \leq 3+e(G-V_{S}). 
\end{cases}
\]
So we obtain
\[
\begin{aligned}
  e(G) &= e(G[V_{1}])+e(V_{1}, V_{2})+e(G[V_{2}]) \\
  &\leq \frac{(b+3)(b+2)+(3-b)s+(b-2)\l+6}{2} + e(G-V_{S}) \\
  &\leq \frac{(b+3)(b+2)+(b+2)+6}{2}+e\left(G-V_{S}\right) < \frac{(b+4)(b+3)}{2}+e(G-V_{S}) \\
  &= e(K_{b+4} \cup (G-V_{S})),
\end{aligned}
\]
a contradiction. Therefore, we know that if $\Delta(G)=b+2$, then $d_{G}(v_{i})\le b$ for all $i\in \{1,2,...,s\}$.

\vspace{0.3cm}

\noindent\textbf{Case 2.} $\Delta(G)=b+1$.

For any subset $S$ of $\{u_{1},u_{2},...,u_{t}\}$, similar to Case 1, we have
\[
\begin{cases} 
  e(G[V_{1}]) \leq \dfrac{\sum (b+1-t_{i})+(b+2-s)(b+1)}{2}=\dfrac{(b+2)(b+1)-\sum t_i}{2}; \\
  e(V_{1}, V_{2})=\sum t_i; \\
  e(G[V_{2}])=e(G[S])+e(S,V_{2}\setminus S)+e(G-V_{S}).
\end{cases}
\]
and $e(G)=e(G[V_{1}])+e(V_{1}, V_{2})+e(G[V_{2}])$.

\vspace{0.3cm}

\noindent\textbf{Case 2.1} $t_{i}=1$ for all $i\in \{ 1,2,...,s\}$.

In this subcase, note that $t_{1}+\cdots+t_{s}= s\le b+1$. Let $S=\{u_{1}\}$, then we have $V_{S}=V_{1}\cup \{u_{1}\}$ and $|V_{S}|=b+3$. Since $d_{G}(v_{j})\ge b+1$ and $t_{j}=1$, we know $d_{V_{2}\setminus S}(u_{1})\le 2$, as otherwise $G$ contains an $S_{3,b}$ with central edge $u_1v_j$. So $e(G[S])+e(S,V_{2}\setminus S)\le 2$ and then
\[
\begin{aligned}
  e(G)&=e(G[V_{1}])+e(V_{1}, V_{2})+e(G[V_{2}])\\
  &\leq \frac{(b+2)(b+1)+(b+1)+4}{2} + e(G-V_{S}) \\
  &< \frac{(b+3)(b+2)}{2} + e(G-V_{S})\\
  &= e(K_{b+3} \cup (G-V_{S})),
\end{aligned}
\]
a contradiction.

\vspace{0.3cm}

\noindent\textbf{Case 2.2.} There exists $k\in \{ 1,2,...,s\}$ such that $t_{k}=2$.

If $k=j$, without loss of generality, assume $u_{2}$ is another neighbor of $v_{j}$ in $V_{2}$ and let $S=\{u_{1},u_{2}\}$, then we have $V_{S}=V_{1}\cup\{u_{1},u_{2}\}$ and $|V_{S}|=b+4$. Again we know $d_{V_{2}\setminus S}(u_{1})\le 2$, $d_{V_{2}\setminus S}(u_{2})\le 2$. And there may be one edge between $u_{1}$ and $u_{2}$, so we obtain $e(G[V_{2}])\le 5+e(G-V_{S})$.
If $k\ne j$ and $d_{G}(v_{k})\ge b+1$, then the same as case $k=j$, we have $e(G[V_{2}])\le 5+e(G-V_{S})$. If $k\ne j$ and $d_{G}(v_{k})\le b$, without loss of generality, assume $u_{k}\ne u_{1}$ is one of the neighbors of $v_{k}$ and let $S=\{u_{1},u_{k}\}$, then we have $V_{S}=V_{1}\cup \{u_{1},u_{k}\}$ and $|V_{S}|=b+4$. Note that $d_{V_{2}\setminus S}(u_{1})\le 2$, $d_{V_{2}\setminus S}(u_{k})\le b$ and there may be one edge between $u_{1}$ and $u_{k}$. Thus we obtain $e(G[V_{2}])\le b+3+e(G-V_{S})$. According to all the analysis in this subcase and since $t_{1}+\cdots+t_{s}\le 2(b+1)$, we have
\[
\begin{aligned}
  e(G)&=e(G[V_{1}])+e(V_{1}, V_{2})+e(G[V_{2}])\\
  &\leq \frac{(b+2)(b+1)+2(b+1)+2(b+3)}{2} + e(G-V_{S})\\
  &< \frac{(b+4)(b+3)}{2} + e(G-V_{S})\\ 
  &= e(K_{b+4} \cup (G-V_{S})),
\end{aligned}
\]
a contradiction. This completes the proof of Claim \ref{clm:dvi}.
\end{proof}

\begin{claim}\label{clm:nneq}
If $n<2(b+4)$, then for $b\le 10$ or for $b\ge 11$ with $n\ne 2b+1,2b+2$, we have $\Delta(G)=b$ and $e(G)=\left\lfloor\frac{bn}{2}\right\rfloor$.
\end{claim}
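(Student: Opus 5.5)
The plan is to prove matching lower and upper bounds. The lower bound is already available: by \eqref{eq:lowerbound}, $e(G)\ge\lfloor bn/2\rfloor$. By Lemma~\ref{lem:maxdegree} we also have $b\le\Delta(G)\le b+2$. If $\Delta(G)=b$, then $e(G)\le bn/2$ gives $e(G)=\lfloor bn/2\rfloor$ at once. So it suffices to show that $\Delta(G)\in\{b+1,b+2\}$ forces $e(G)<\lfloor bn/2\rfloor$, or forces a strictly better competitor, whenever $n<2(b+4)$ and we are not in the exceptional range $b\ge 11$, $n\in\{2b+1,2b+2\}$.

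Assume $\Delta:=\Delta(G)\in\{b+1,b+2\}$ and keep the notation $v_0$, $V_1=N_G[v_0]$, $V_2$, $s$, $t_i$ introduced above. Write $m=|V_2|=n-\Delta-1$; the hypothesis $n<2(b+4)$ gives $m\le b+6-\Delta$. By Claim~\ref{clm:dvi}, every $v_i$ with $i\le s$ has degree at most $b$. The remaining vertices $v_0,v_{s+1},\dots,v_\Delta$ have all their neighbours inside $V_1$, so each has degree at most $\Delta$. This gives
\[
2e(G[V_1])+e(V_1,V_2)\le (\Delta+1-s)\Delta+sb,\qquad e(V_1,V_2)=\sum_{i\le s}t_i\le 2s.
\]
For $V_2$, every $u\in V_2$ satisfies $d_G(u)\le (m-1)+d_{V_1}(u)$. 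The total number of edges from $V_2$ into $V_1$ is at most $2s$, and in Case $\Delta=b+2$ a vertex $v_i$ with $t_i=2$ has $d_{V_1}(v_i)=1$, so it contributes heavily to the deficit. Summing degrees, I will obtain a bound on $2e(G)$ of the form $bn+(\text{excess from }V_1)-(\text{deficit from }V_2)$. Here the excess is at most $(\Delta+1-s)(\Delta-b)$, and the deficit is about $m\bigl(b-(m-1)\bigr)-O(s)$ whenever $m\le b$.

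The next step is a case split on $m$, that is, on $n$.

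\emph{When $m$ is clearly smaller than $b$:} the vertices of $V_2$ have degree at most about $m+1<b$. Their deficit is of order $m(b-m)$, which exceeds the excess of at most $2(b+3)$ unless $m$ is within a small constant of $b$. This yields $e(G)<\lfloor bn/2\rfloor$, contradicting \eqref{eq:lowerbound}.

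\emph{When $m$ is larger:} this means $m\ge b+1$, which is only possible for $\Delta=b+1$ and $n$ near $2b+6$. Here I will instead compare $G$ with $K_{b+4}\cup(G-V_S)$, as in the proof of Claim~\ref{clm:dvi}. Alternatively, I will compare with a near $b$-regular graph on $n$ vertices and use that few edges leave $V_1$.

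The remaining window is $m\in\{b-2,b-1,b\}$ (and $b-3$ for small $b$), which corresponds to $n$ around $2b+1,2b+2$. In this window I will do the count exactly, tracking parity via floors. The goal is to verify that for $b\le 10$, and for the values $n\ne 2b+1,2b+2$, the excess $(\Delta+1-s)(\Delta-b)$ with $s\ge1$ still does not beat the deficit $m(b-m+1)-2s$. That gives $e(G)\le\lfloor bn/2\rfloor$ with strict inequality when $\Delta>b$, which is again a contradiction.

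I expect this boundary window to be the main obstacle. It is exactly where the exceptional graphs of parts (ii) and (iii) of Lemma~\ref{lem:connected} live, so the inequalities are tight and depend on $b$ through constants. There I would first try optimizing over $s$, since the excess decreases in $s$ while the deficit loses only $2s$. I would then handle $\Delta=b+2$ separately using $d_{V_1}(v_i)=1$ when $t_i=2$. Once $\Delta(G)=b$ is established in all non-exceptional cases, $e(G)=\lfloor bn/2\rfloor$ follows from the two bounds above.
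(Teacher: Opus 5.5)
Your reduction is correct: if $\Delta(G)=b$ then $e(G)=\lfloor bn/2\rfloor$, so everything rests on excluding $\Delta(G)\in\{b+1,b+2\}$. The gap is in how you exclude them. For most of the range you rely on an excess/deficit count to show $e(G)<\lfloor bn/2\rfloor$, and this cannot succeed in several regimes. Connected $S_{3,b}$-free graphs that satisfy every constraint you use (Claim~\ref{clm:dvi}, $t_i\le 2$, $d(u)\le m-1+d_{V_1}(u)$) really do beat $\lfloor bn/2\rfloor$ there.

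\begin{itemize}
\item For $\Delta=b+2$ your count only yields $2e(G)-bn\le 2(b+3)-m(b-m+1)$. This bound equals $8$ at $m=2$ for every $b$, so the claim that the deficit wins unless $m$ is close to $b$ already fails at the small end. Concretely, take $n=b+5$. Let $W\ni v_0$ be a clique on $b-1$ vertices, completely joined to an independent set $\{v_1,\dots,v_4\}$. Add an edge $u_1u_2$ with $v_1,v_2\sim u_1$ and $v_3,v_4\sim u_2$. This graph is connected, has $\Delta=b+2$, $d(v_i)=b$, $t_i=1$ and $e=\frac{bn}{2}+2$. It is $S_{3,b}$-free: the centre carrying $b$ leaves must lie in $W$, whose closed neighbourhood is $V_1$, and this leaves the other centre at most two leaves.
\item The graph $H_2$ of Claim~\ref{clm:n=2b+1} is also well defined for $b=10$. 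It has $n=21$, $\Delta=11$ and $106>105=\lfloor bn/2\rfloor$ edges, so your window step for $b\le 10$ is false as stated.
\end{itemize}

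The missing idea is to compare with $K_{b+4}\cup K_q$, where $q=n-b-4$, which is what the paper does throughout. For $\Delta=b+2$ this gives $2e(G)\le (b+3)(b+2)+\min\{(b+2)(q+1),q(q+1)\}<(b+4)(b+3)+q(q-1)$ for all $1\le q\le b+3$. For $\Delta=b+1$, the corresponding bound loses to $K_{b+4}\cup K_q$ whenever $q<\frac{3b+7}{4}$ or $q\ge b$, which covers every $q$ when $b\le 10$. It loses to the near $b$-regular graph when $b\ge 10$ and $1\le q\le b-4$.

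Within the scope of the claim, that leaves $b\ge 11$ and $n=2b+3$ (so $m=b+1$ when $\Delta=b+1$). Your plan has no working argument for this case.

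\begin{itemize}
\item With $s$ as large as $b+1$, your count gives only $2e(G)\le (b+2)(b+1)+s+b(b+1)\le 2b^2+5b+3$. This is not below $2e(K_{b+4}\cup K_{b-1})=2b^2+4b+14$ once $b\ge 11$.
\item Comparing with a near $b$-regular graph is pointless here, since $K_{b+4}\cup K_{b-1}$ already has more edges.
\item Reusing the $K_{b+4}\cup(G-V_S)$ argument of Claim~\ref{clm:dvi} does not transfer. There the bound on $d_{V_2\setminus S}(u_1)$ came from a neighbour $v_j$ of degree at least $b+1$, and Claim~\ref{clm:dvi} has now excluded such neighbours.
\end{itemize}

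The paper handles this case with an extra structural step. It first shows $d_G(u)\le b$ for every $u$ at distance $2$ from $v_0$, by bounding the degrees of the $V_1$-neighbours of a hypothetical such $u$ of degree $b+1$. The degree sum then gives $2e(G)\le (b+1)(2b+2)-s<2b^2+4b+14$.

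Two smaller slips. First, $n\le 2b+7$ gives $m\le 2b+6-\Delta$, not $b+6-\Delta$. Second, $m\ge b+1$ is also possible for $\Delta=b+2$, namely when $n\ge 2b+4$.
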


\begin{proof} 
Let $n=b+4+q$ with $1\le q\le b+3$. We first consider the case $\Delta(G)=b+2$. Note that $|V_{2}|=n-(b+3)=q+1$, then by Claim~\ref{clm:dvi}, we have
\begin{equation}
\begin{cases}
  e(G[V_{1}]) \leq \dfrac{\sum (b-t_i)+(b+3-s)(b+2)}{2} 
    = \dfrac{(b+3)(b+2)-2s-\sum t_i}{2}; \\
  e(V_{1}, V_{2}) = \sum t_i; \\
  e(G[V_{2}]) \leq \min\left\{\dfrac{(b+2)(q+1)}{2}, \dfrac{q(q+1)}{2}\right\}.\label{eq:b+2}
\end{cases} 
\end{equation}
Since $t_1+\cdots+t_s\leq 2s$, we obtain
\begin{align*}
e(G) &=e(G[V_{1}])+e(V_{1}, V_{2})+e(G[V_{2}])\\
  &\leq \frac{(b+3)(b+2)-2s+\sum t_i}{2} + \min\left\{\frac{(b+2)(q+1)}{2}, \frac{q(q+1)}{2}\right\} \\[6pt]
  &\leq \frac{(b+3)(b+2)}{2} + \min\left\{\frac{(b+2)(q+1)}{2}, \frac{q(q+1)}{2}\right\} \\[6pt]
  &\leq \begin{cases}
  \dfrac{(b+3)(b+2)}{2} + \dfrac{(b+2)(q+1)}{2},~~ & q = b+3, b+2; \\[6pt]
  \dfrac{(b+3)(b+2)}{2} + \dfrac{q(q+1)}{2}, & q \leq b+1.
  \end{cases} \\[6pt]
  &< \frac{(b+4)(b+3)}{2} + \frac{q(q-1)}{2} = e(K_{b+4} \cup K_q),
\end{align*}
a contradiction. So if $n<2(b+4)$, then we have $\Delta(G)\le b+1$. 

Next we consider the case $\Delta(G)=b+1$. Note that $|V_{2}|=n-(b+2)=q+2$.
Let $H_{1}$ be a near $b$-regular graph on $b+4+q$ vertices. Note that $H_{1}$ is $S_{3, b}$-free and $e(H_{1})=\left\lfloor\frac{b(b+4+q)}{2}\right\rfloor$. The same as \eqref{eq:b+2}, we consider three parts of $e(G)$ and then obtain
\begin{equation}
\begin{aligned}
&e(G)=e(G[V_{1}])+e(V_{1}, V_{2})+e(G[V_{2}])\\
  &\leq \frac{(b+2)(b+1)-s+\sum t_i}{2}+\min\left\{\frac{(b+1)(q+2)-\sum t_i}{2},\frac{(q+2)(q+1)}{2}\right\} \\
  &\leq 
  \begin{cases}
  \dfrac{(b+2)(b+1)-s}{2}+\dfrac{(b+1)(q+2)}{2}, ~~& b \leq q \leq b+3;\\[8pt]
  \dfrac{(b+2)(b+1)+s}{2}+\dfrac{(q+2)(q+1)}{2}, & \text{otherwise}.
  \end{cases} \\
  &< 
  \begin{cases}
  \dfrac{(b+4)(b+3)}{2}+\dfrac{q(q-1)}{2}=e(K_{b+4} \cup K_q), ~~& b \leq q \leq b+3 \text{ or } 0\leq q <\frac{3b+7}{4};\\[8pt]
  \left\lfloor\dfrac{b(b+4+q)}{2}\right\rfloor=e(H_1), & b\ge 10 \text{ and }  1 \leq q \leq b-4. \label{eq:main}
  \end{cases}
\end{aligned} 
\end{equation}
The last inequality is due to $1\le s\le b+1$. If $b\le 10$, then $b-1<\frac{3b+7}{4}$, so we have $e(G)<e(K_{b+4} \cup K_q)$ for all $q$, a contradiction. Hence we obtain $\Delta(G)\le b$. If $b\ge 11$ and $q\ne b-3, b-2, b-1$, then we know $e(G)< \max \{e(K_{b+4} \cup K_q), e(H_1)\}$, a contradiction. So we have $\Delta(G)\le b$.


For $b\ge 11$ and $b-3\le q\le b-1$, we claim that $d_{G}(u_{i})\le b$ for any $i\in \{1,...,t\}$. Otherwise, suppose there exists $j\in \{1,...,t\}$ with $d_{G}(u_{j})=b+1=\Delta(G)$. Then we consider $r:=d_{V_{2}}(u_{j})$. Note that $|V_2|=q+2$, so $e(G[V_{2}]) \leq \frac{q(q+1)}{2} + r$. If $5\le r\le q+1$, then we have $d_{G}(v_{i})\le b+4-r$ for any $v_{i}\in N_{G}(u_{j})\cap V_{1}$. Otherwise $|N(v_{i})\backslash N[u_{j}]|\ge 3$ and then $G$ must contain an $S_{3,b}$ with central edge $u_{j}v_{i}$. It means that there are $b+1-r$ vertices in $\{v_{1},v_{2},...,v_{s}\}$ each of degree at most $b+4-r$. Hence we have $\sum_{i=1}^s d_{V_{1}}(v_{i})\le \sum_{i=1}^s(b-t_{i})-(r-4)(b+1-r)$ and then obtain
\[
\begin{cases}
  e(G[V_{1}]) \leq \dfrac{\sum (b-t_{i})-(r-4)(b+1-r)+(b+2-s)(b+1)}{2}; \\
  e(V_{1}, V_{2}) = \sum t_i; \\
  e(G[V_{2}]) \leq \dfrac{q(q+1)}{2} + r.
\end{cases}
\]
Then,
\begin{align*}
  e(G)&=e(G[V_{1}])+e(V_{1}, V_{2})+e(G[V_{2}])\\
  &\leq \frac{(b+2)(b+1)-s+\sum t_i+q(q+1)-(r-4)(b+1-r)+2r}{2} \\[8pt]
  &\leq \frac{(b+2)(b+1)+(b+1)+q(q+1)+(r^{2}-(b+3)r+4(b+1))}{2} \\[8pt]
  &\leq \frac{(b+2)(b+1)+(b+1)+q(q+1)+((q+1)^{2}-(b+3)(q+1)+4(b+1))}{2}.
\end{align*}
The last inequality is due to $r=q+1$ maximizes $r^{2}-(b+3)r$ when $q\ge b-3$ and $5\le r\le q+1$. Together with $q \leq b-1$, we then have
\[
e(G) \leq \frac{(b+7)(b+1)+(2q-b-2)(q+1)}{2} < e(K_{b+4} \cup K_{q}),
\]
which leads to a contradiction. 
If $r\le 4$, then we have 
\[
\begin{cases}
  e(G[V_{1}]) \leq \dfrac{\sum (b-t_{i})+(b+2-s)(b+1)}{2}; \\
  e(V_{1}, V_{2}) = \sum t_i; \\
  e(G[V_{2}]) \leq \dfrac{q(q+1)}{2} + 4.
\end{cases}
\]
And then it is straightforward to check that $e(G)<e(K_{b+4}\cup K_{q})$, a contradiction. So we have $d_{G}(u_{i})\le b$ for any $i\in \{1,...,t\}$. Now we reconsider $e(G)$ in the case $b\ge11$ and $q=b-1$ and show that $\Delta(G)\le b$. Clearly,
\begin{align*}
e(G)=\frac{1}{2} \sum_{v\in V(G)} d_{G}(v)&=\frac{1}{2} \sum_{v\in V_{1}} d_{G}(v)+\frac{1}{2} \sum_{v\in V_{2}} d_{G}(v)\\
  &\leq \frac{bs+(b+2-s)(b+1)+tb+(q+2-t)(q+1)}{2} \\
  &= \frac{(b+2)(b+1)-s+tb+(q+2-t)(q+1)}{2} \\
  &< e(K_{b+4} \cup K_{q}),
\end{align*}
a contradiction. So we obtain $\Delta(G)\le b$ for $b\ge11$ and $q=b-1$.

Since $G\in \Ex(n, S_{3,b})$, by Lemma~\ref{lem:maxdegree}, we have $\Delta(G)\ge b$. Together with the discussions above, we know that if $n<2(b+4)$, then for $b\le 10$ or for $b\ge 11$ with $n\ne 2b+1,2b+2$, we have $\Delta(G)=b$ and $G$ is a near $b$-regular graph with  $\left\lfloor\frac{bn}{2}\right\rfloor$ edges. This completes the proof of Claim~\ref{clm:nneq}. 
\end{proof}

The conditions of Claim~\ref{clm:nneq} are $n<2(b+4)$, $b\le 10$ or $b\ge 11$ with $n\ne 2b+1,2b+2$.
Actually, we can prove $n<2(b+4)$ from the conditions of Lemma~\ref{lem:connected}. This will be placed in Claim~\ref{clm:n<2(b+4)}. And in the next two claims we consider the cases $b\ge 11$ with $n=2b+1$ and $n=2b+2$. Based on the discussions above, we know $\Delta(G)\le b+1$ if $n<2(b+4)$. And if $b\ge 11$ and $2b+1\le n\le 2b+3$, then $d_{G}(u_{i})\le b$ for any $i\in \{1,...,t\}$.

\begin{claim}\label{clm:n=2b+1} 
If $b\ge11$ and $n=2b+1$, then $\Delta(G)=b+1$ and $e(G)=\left\lfloor\frac{bn+3}{2}\right\rfloor$.
\end{claim}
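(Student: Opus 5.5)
The proof has two halves. First, exhibit an $S_{3,b}$-free graph on $2b+1$ vertices with $\left\lfloor\frac{b(2b+1)+3}{2}\right\rfloor$ edges. Second, show that a connected extremal $G$ on $n=2b+1$ vertices has at most this many edges. Since $\left\lfloor\frac{bn+3}{2}\right\rfloor>\left\lfloor\frac{bn}{2}\right\rfloor$, the lower bound rules out $\Delta(G)=b$: if $\Delta(G)=b$ we would have $e(G)\le\left\lfloor\frac{bn}{2}\right\rfloor$, contradicting extremality. Together with $\Delta(G)\le b+1$ from the proof of Claim~\ref{clm:nneq} (valid since $n<2(b+4)$), this forces $\Delta(G)=b+1$. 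It then remains to prove $2e(G)\le bn+3$.

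For the upper bound I would use the notation of Claim~\ref{clm:nneq} with $\Delta(G)=b+1$ and $q=b-3$. Then $|V_1|=b+2$ and $|V_2|=b-1$. By Claim~\ref{clm:dvi}, $d(v_i)\le b$ for $1\le i\le s$. By the remark after Claim~\ref{clm:nneq}, $d(u_i)\le b$. Every vertex of $V_2$ at distance at least $3$ from $v_0$ has degree at most $|V_2|-1=b-2$. Write $2e(G)=\sum_v d(v)$ and record each vertex's deficit or excess relative to $b$. The vertices of degree $b+1$ all lie in $V_1\setminus\{v_1,\dots,v_s\}$, and such a vertex is adjacent to all of $V_1$. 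So the goal is to show that the total excess $b+2-s$ is cancelled by deficits, up to at most $3$.

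The deficits come from two sources. One is the at most $b-1-t$ far vertices of $V_2$, each with deficit at least $2$. The other is the constraint $d_{V_1}(v_i)=1$ whenever $t_i=2$; otherwise $v_iv_0$ is the central edge of an $S_{3,b}$. I would split according to the number $m$ of indices with $t_i=2$. The $S_{3,b}$-freeness around an edge $v_iu_j$ bounds how many $V_1$-neighbours each $u_j$ can have before some full vertex $w$ and $v_i$ form an $S_{3,b}$ with central edge $wv_i$. Combining the two sources, I expect
\[
2e(G)\le (b+2)(b+1)-s-mb+\sum_{j}d(u_j)+(b-1-t)(b-2),
\]
with $\sum_j d(u_j)\le tb$ and $e(V_1,V_2)=s+m$ limiting how large $t$ can be usefully. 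The target is then to show that this expression is at most $b(2b+1)+3$, optimizing over $s,m,t$.

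I expect this optimization, together with the construction, to be the main obstacle. A naive construction fails: take $V_1$ to be $K_{b+2}$ minus a matching, with each matched vertex sending one edge into $V_2$. The vertices of $V_2$ then have degree at most $b-1$, which gives only about $2b^2+4$ in the degree sum, short of $bn+3=2b^2+b+3$. So the extremal graph must use vertices with $t_i=2$ and $d_{V_1}(v_i)=1$, so that the $b-1$ vertices of $V_2$ can each reach degree $b$. I would therefore try the following construction. Let $v_0$ have degree $b+1$. Take a small number of neighbours $v_i$ that each send two edges into $V_2$. Fill $V_1$ so that all remaining vertices have degree $b+1$ or $b$. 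Make $V_2$ a near-complete graph on $b-1$ vertices, each vertex receiving exactly two edges from $V_1$. Finally tune the parities so that exactly three units of excess survive, and check $S_{3,b}$-freeness edge by edge.

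Matching this extremal configuration will be the hardest step. For the upper bound I would first try to show that $e(V_1,V_2)\le$ a small constant plus the number of $v_i$ with $t_i=2$, and that each such $v_i$ costs about $b$ in the degree sum. That would pin the optimum to exactly the $+3$.
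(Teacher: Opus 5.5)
\textbf{Verdict: there is a genuine gap.} Your reduction is fine. The lower-bound construction together with $\Delta(G)\le b+1$ from the proof of Claim~\ref{clm:nneq} forces $\Delta(G)=b+1$, which matches the paper. The problem is the upper-bound bookkeeping, which rests on a false constraint.

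\textbf{The constraint $t_i=2\Rightarrow d_{V_1}(v_i)=1$ fails here.} That implication holds only when $d(v_0)=b+2$, which is where the paper uses it. Here $d(v_0)=b+1$. Consider an $S_{3,b}$ with central edge $v_iv_0$ and $v_0$ on the $b$-side. All $b$ vertices of $N(v_0)\setminus\{v_i\}$ must then be leaves of $v_0$, so $v_i$ would need three neighbours outside $V_1$, which is impossible since $t_i\le 2$. Nor can $v_i$ be on the $b$-side, because $d(v_i)\le b$ by Claim~\ref{clm:dvi}. So a vertex with $t_i=2$ may have $b-2$ neighbours in $V_1$ and full degree $b$.

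\textbf{Consequences for your inequality and construction.} Your displayed inequality is false. The actual extremal graph has $s=m=t=b-1$ and degree sum at least $2b^2+b+2$, while your right-hand side evaluates to $b^2+2b+3$. Your construction has the same defect, and it is also internally inconsistent. If each of the $b-1$ vertices of $V_2$ receives two edges from $V_1$ and $t_i\le 2$, then $s\ge b-1$, not ``a small number''. And if those $v_i$ had $d_{V_1}(v_i)=1$, the degree sum would fall far short of $bn+3$. Likewise, your plan to show $e(V_1,V_2)\le$ a constant plus $m$ cannot work, since $e(V_1,V_2)=s+m$ and $s=b-1$ at the optimum.

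\textbf{The missing idea: the correct trade-off.} Each $v_i$ with $i\le s$ costs only one unit of degree ($b$ instead of $b+1$). The gain is capped because $|V_2|=b-1$:
\[
\sum_{u\in V_2}d(u)\le (b-1)(b-2)+e(V_1,V_2)\le (b-1)(b-2)+2s .
\]
Combined with $\sum_{v\in V_1}d(v)\le (b+2)(b+1)-s$, this gives $2e(G)\le 2b^2+4+s=bn+3-(b-1-s)$, which settles the case $s\le b-1$.

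If $s\ge b$, only $v_0$ and at most one other vertex can have degree $b+1$. Every other vertex has degree at most $b$: this uses $d(u_j)\le b$, and vertices at distance at least $3$ from $v_0$ have degree at most $b-2$. Hence $2e(G)\le bn+2$.

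\textbf{The correct construction.} Equality at $s=b-1$ dictates it:
\begin{itemize}
\item $V_2$ induces $K_{b-1}$.
\item Each $u_j$ receives exactly two edges from $v_1,\dots,v_{b-1}$.
\item $G[\{v_1,\dots,v_{b-1}\}]$ is (near) $(b-5)$-regular.
\item $v_0,v_b,v_{b+1}$ are adjacent to all of $V_1$.
\end{itemize}
This graph is $S_{3,b}$-free. A $b$-side centre must have degree $b+1$, so it is one of $v_0,v_b,v_{b+1}$, whose closed neighbourhood is exactly $V_1$. The other centre would then need three neighbours in $V_2$, and no vertex has that.
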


\begin{proof}
Let $n=b+4+q$ with $q=b-3$, we know from \eqref{eq:main} that
\begin{equation}
e(G) \leq \dfrac{(b+2)(b+1)+s}{2} + \dfrac{(b-1)(b-2)}{2}. \label{eq:q=b-3}
\end{equation}

We first construct a graph $H_{2}$ that the equality in \eqref{eq:q=b-3} holds and we adopt the same notation as for $G$ to describe the corresponding objects for all constructed graphs in this paper.
As shown in Fig.~\ref{fig:H2} (for simplicity, we omit the internal structures of the two ellipses in all figures.):

\begin{itemize}
  \item $d_{V_{2}}(v_{i})=2$ for each $i\in \{1,2,...,b-1\}$;
  \item $d_{H_{2}}(v_{0}) = d_{H_{2}}(v_{b}) = d_{H_{2}}(v_{b+1}) = b+1$;
  \item $H_{2}[V_{2}]\cong K_{b-1}$;
  \item $H_{2}[\{v_{1},...,v_{b-1}\}]$ is a $(b-5)$-regular graph if $b$ is odd, otherwise it has $b-2$ vertices of degree $b-5$ and one vertex of degree $b-6$.
\end{itemize}
Note that $H_{2}$ is $S_{3,b}$-free with $e(H_{2})=\left\lfloor\frac{b(2b+1)+3}{2}\right\rfloor=\frac{(b+2)(b+1)+b-1}{2} + \frac{(b-1)(b-2)}{2}$.

\hfill

\begin{figure}[H]
\centering
\begin{minipage}[t]{0.48\textwidth}
  \centering
  \resizebox{\linewidth}{!}{%
    \begin{tikzpicture}[
        node/.style={circle, draw, fill=black, inner sep=1.5pt}, 
        line width=0.8pt,
        font=\small
    ]

        \node[node] (v0) at (0, 4.0) {};
        \node at (0, 4.3) {$v_0$}; 
        
        \node[node] (v1) at (-3.5, 2.2) {};
        \node[node] (v2) at (-2.5, 2.2) {};
        \node[node] (v3) at (-1.5, 2.2) {};
        \node[node] (v4) at (-0.5, 2.2) {};
        \node at (0,2.2) {$\dots$};
        \node[node] (v5) at (0.5, 2.2) {};
        \node[node] (v6) at (1.5, 2.2) {};
        \node[node,label=below:$v_b$] (vb) at (2.5, 2.2) {};
        \node[node,label=below:$v_{b+1}$] (vb+1) at (3.5, 2.2) {};
        
        \draw[blue, thick] (-3.8, 1.9) rectangle (1.8, 2.5); 
        \node at (-3.4, 3.4) {$\{v_i|i=1,2,\dots,b-1\}$};
        \draw[blue] (-3.7, 3.0) -- (-3.4, 2.4); 

        \draw[lightgray] (0, 2.2) ellipse (4.6cm and 0.9cm); 
        \draw[lightgray] (0, 0) ellipse (4.6cm and 0.9cm); 
        \node at (-5.3, 2.2) {$V_{1}\setminus v_{0}$}; 
        \node at (-5.2, 0) {$V_{2}$}; 
        
        \node[node,label=below: $u_1$] (u1) at (-3.5, 0) {};
        \node[node,label=below: $u_2$] (u2) at (-2.5, 0) {};
        \node[node,label=below: $u_3$] (u3) at (-1.5, 0) {};
        \node[node,label=below: $u_4$] (u4) at (-0.5, 0) {};
        \node at (0,0) {$\dots$};
        \node[node,label=below: $u_{b-2}$] (u5) at (0.5, 0) {};
        \node[node,label=below: $u_{b-1}$] (u6) at (1.5, 0) {};
        
        \draw (v0) -- (v1);
        \draw (v0) -- (v2);
        \draw (v0) -- (v3);
        \draw (v0) -- (v4);
        \draw (v0) -- (v5);
        \draw (v0) -- (v6);
        \draw (v0) -- (vb);
        \draw (v0) -- (vb+1);

        \draw (v1) -- (u1); 
        \draw (v1) -- (u2);
        \draw (v2) -- (u1);
        \draw (v2) -- (u2);
        \draw (v3) -- (u3);
        \draw (v3) -- (u4);
        \draw (v4) -- (u3);
        \draw (v4) -- (u4);
        \draw (v5) -- (u5); 
        \draw (v5) -- (u6); 
        \draw (v6) -- (u5); 
        \draw (v6) -- (u6); 
\end{tikzpicture}}
\vspace{5pt} 
\footnotesize{$b$ is odd} 
\end{minipage}
\hfill
\begin{minipage}[t]{0.48\textwidth}
\centering
\resizebox{\linewidth}{!}{%
\begin{tikzpicture}[
        node/.style={circle, draw, fill=black, inner sep=1.5pt}, 
        line width=0.8pt,
        font=\small
    ]

        \node[node] (v0) at (0, 4.0) {};
        \node at (0, 4.3) {$v_0$}; 
        
        \node[node] (v1) at (-3.7, 2.2) {};
        \node[node] (v2) at (-2.8, 2.2) {};
        \node at (-2.3,2.2) {$\dots$};
        \node[node] (v3) at (-1.7, 2.2) {};
        \node[node] (v4) at (-0.8, 2.2) {};
        \node[node] (v5) at (0.1, 2.2) {};
        \node[node] (v6) at (1.0, 2.2) {};
        \node[node] (v7) at (1.9, 2.2) {};
        \node[node,label=below:$v_b$] (vb) at (2.9, 2.2) {};
        \node[node,label=below:$v_{b+1}$] (vb+1) at (3.7, 2.2) {};
        
        \draw[blue, thick] (-3.9, 1.9) rectangle (2.1, 2.5); 
        \node at (-3.4, 3.4) {$\{v_i|i=1,2,\dots,b-1\}$};
        \draw[blue] (-3.8, 3.0) -- (-3.5, 2.4); 

        \draw[lightgray] (0, 2.2) ellipse (4.6cm and 0.9cm); 
        \draw[lightgray] (0, 0) ellipse (4.6cm and 0.9cm); 
        \node at (-5.3, 2.2) {$V_{1}\setminus v_{0}$}; 
        \node at (-5.2, 0) {$V_{2}$}; 
        
        \node[node,label=below: $u_1$] (u1) at (-3.7, 0) {};
        \node[node,label=below: $u_2$] (u2) at (-2.8, 0) {};
        \node at (-2.3,0) {$\dots$};
        \node[node,label=below: $u_{b-5}$] (u3) at (-1.7, 0) {};
        \node[node,label=below: $u_{b-4}$] (u4) at (-0.8, 0) {};
        \node[node,label=below: $u_{b-3}$] (u5) at (0.1, 0) {};
        \node[node,label=below: $u_{b-2}$] (u6) at (1.0, 0) {};
        \node[node,label=below: $u_{b-1}$] (u7) at (1.9, 0) {};

        \draw (v0) -- (v1);
        \draw (v0) -- (v2);
        \draw (v0) -- (v3);
        \draw (v0) -- (v4);
        \draw (v0) -- (v5);
        \draw (v0) -- (v6);
        \draw (v0) -- (v7);
        \draw (v0) -- (vb);
        \draw (v0) -- (vb+1);
        
        \draw (v1) -- (u1); 
        \draw (v1) -- (u2);
        \draw (v2) -- (u1);
        \draw (v2) -- (u2);
        \draw (v3) -- (u3);
        \draw (v3) -- (u4);
        \draw (v4) -- (u3);
        \draw (v4) -- (u4);
        \draw (v5) -- (u5); 
        \draw (v5) -- (u6); 
        \draw (v6) -- (u5); 
        \draw (v6) -- (u7); 
        \draw (v7) -- (u6);
        \draw (v7) -- (u7);       
\end{tikzpicture}}
\vspace{5pt}
\footnotesize{$b$ is even} 
\end{minipage}
\caption{Two cases of $H_2$}
\label{fig:H2}
\end{figure}
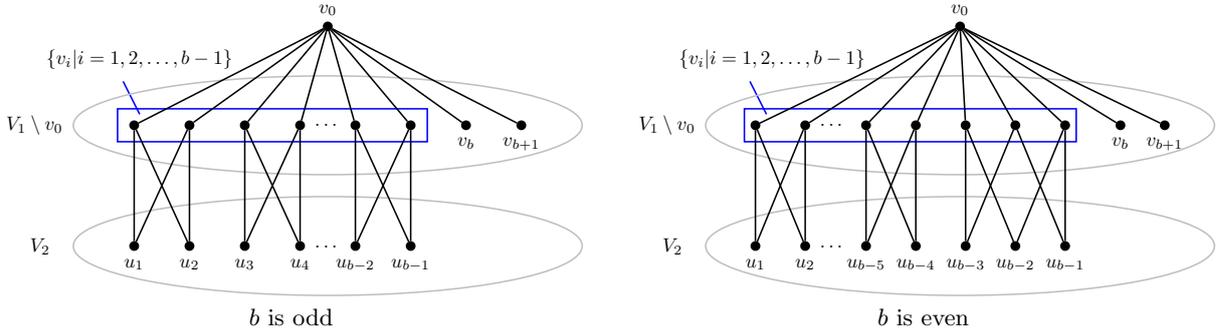

\hfill

If $s\le b-1$, then by \eqref{eq:q=b-3}, we have
\[
e(G) \leq \frac{(b+2)(b+1)+b-2}{2} + \frac{(b-1)(b-2)}{2} \leq \left\lfloor \frac{b(2b+1)+3}{2} \right\rfloor = e(H_{2}).
\]
If $s\ge b$, then by Claim~\ref{clm:dvi}, there are at most two vertices in $V_{1}$ with degree $b+1$ and all other vertices in $V_1$ have degree at most $b$. 
For vertices in $V_2$, we know $d_{G}(u_{i})\le b$ for any $i\in \{1,...,t\}$, and other vertices in $V_2$ have degree at most $|V_2|-1<b$.
So we still have $e(G)\le e(H_{2})$. Hence we have $\Delta(G)=b+1$ and $e(G)=e(H_{2})=\left\lfloor\frac{b(2b+1)+3}{2}\right\rfloor$. This completes the proof of Claim~\ref{clm:n=2b+1}. 
\end{proof}

\begin{claim}\label{clm:n=2b+2}
If $b\ge11$ and $n=2b+2$, then $\Delta(G)=b+1$ and $e(G)=\left\lfloor\frac{bn+2+\left\lfloor\frac{b}{2}\right\rfloor}{2}\right\rfloor$.
\end{claim}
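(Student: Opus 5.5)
The plan is to mirror Claim~\ref{clm:n=2b+1}. First I prove an upper bound by a refined degree count. Then I build a graph $H_3$ on $2b+2$ vertices that attains it. Write $n=b+4+q$ with $q=b-2$, so $|V_2|=n-(b+2)=b$ whenever $\Delta(G)=b+1$.

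\textbf{Step 1: $\Delta(G)=b+1$.} From the discussion before Claim~\ref{clm:n=2b+1} we already know $\Delta(G)\le b+1$ and $d_G(u_i)\le b$ for all $i$. To rule out $\Delta(G)=b$ it suffices that the construction in Step 3 has strictly more than $\lfloor bn/2\rfloor=b(b+1)$ edges. This holds because $\lfloor b/2\rfloor\ge 1$ and hence $2+\lfloor b/2\rfloor\ge 3$.

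\textbf{Step 2: upper bound.} Assume $\Delta(G)=b+1$ and let $T=\sum_{i=1}^s t_i$, so $s\le T\le 2s$. I count
\[
2e(G)=\sum_{v\in V_1}d_G(v)+\sum_{v\in V_2}d_G(v).
\]
For the first sum, Claim~\ref{clm:dvi} gives $d_G(v_i)\le b$ for $i\le s$, and every other vertex of $V_1$ has degree at most $b+1$. Hence $\sum_{V_1}d_G\le (b+2)(b+1)-s$.

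For the second sum, $\sum_{V_2}d_G=2e(G[V_2])+T$. It is bounded in two ways:
\begin{itemize}
\item by $b(b-1)+2s$, since $|V_2|=b$;
\item by $b\cdot b$, since every $u_i$ has degree at most $b$ and every other vertex of $V_2$ has degree at most $|V_2|-1<b$.
\end{itemize}
Therefore
\[
2e(G)\le (b+2)(b+1)-s+\min\{b^2,\;b^2-b+2s\}.
\]
The right-hand side increases in $s$ for $s\le b/2$ and decreases afterwards. Its maximum over integers $s$ is attained at $s=\lfloor b/2\rfloor$ (and also at $\lceil b/2\rceil$). The value there is $(b+2)(b+1)+b^2-\lceil b/2\rceil$, which equals
\[
b(2b+2)+2+\lfloor b/2\rfloor .
\]
Taking the floor gives exactly the claimed value of $e(G)$.

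\textbf{Step 3: construction $H_3$ (expected main obstacle).} Equality in Step 2 forces the following structure, which I would realise as in Fig.~\ref{fig:H2}. Take $G[V_2]\cong K_b$ on $\{u_1,\dots,u_b\}$. Pair the $u$'s, and join each $v_i$, $1\le i\le \lfloor b/2\rfloor$, to one pair, so that $t_i=2$ and each $u_j$ has degree exactly $b$. When $b$ is odd, one $u$ is left over; give it a single neighbour $v_i$ with $t_i=1$, or accept a loss of one in the floor. Every vertex of $V_1$ other than these $v_i$ should have degree $b+1$ with all neighbours inside $V_1$. Each $v_i$ should have $d_{V_1}(v_i)=b-2$. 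So $G[V_1]$ must be a graph on $b+2$ vertices in which $v_0$ and the $\lceil b/2\rceil+1$ remaining vertices are adjacent to everything in $V_1$ except one vertex, and the $v_i$ have the prescribed smaller degree. I would realise this by taking $K_{b+2}$ on $V_1$ and deleting a suitable nearly regular graph among $\{v_1,\dots,v_{\lfloor b/2\rfloor}\}$ together with a matching. Here $b\ge 11$ ensures that the required degrees are realisable.

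The delicate part is verifying that $H_3$ is $S_{3,b}$-free. Each central edge has one endpoint needing $b$ private leaves, so that endpoint must have degree at least $b+1$; only the high-degree vertices of $V_1$ qualify. For each such central edge $wx$, I check that $x$ has fewer than $3$ neighbours outside $N[w]$. For $x=v_i$ these are at most its two $u$'s. For $x$ another high-degree vertex of $V_1$, there is at most one, because it misses only one vertex of $V_1$. Any edge with a $u_j$ fails, because $d(u_j)=b$. Adjusting the deleted matching to handle the parity of $b$ is the step I expect to need the most care.
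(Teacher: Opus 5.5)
Your proposal is correct and is essentially the paper's proof. It uses the same two bounds on $\sum_{v\in V_2}d_G(v)$: the paper applies them separately for $s\le\lfloor b/2\rfloor$ and $s\ge\lfloor b/2\rfloor+1$, while you merge them into one $\min$. It also uses the same extremal graph $H_3$.

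One small correction to Step 3: no extra matching is needed. The vertices $v_0,v_{\lfloor b/2\rfloor+1},\dots,v_{b+1}$ have closed neighbourhood exactly $V_1$, and $H_3[V_1]$ is $K_{b+2}$ minus a (near) $3$-regular graph on $\{v_1,\dots,v_{\lfloor b/2\rfloor}\}$. For odd $b$, the unpaired $u_b$ can simply keep all its neighbours inside $V_2$. This loses nothing against your bound $(b+2)(b+1)+b^2-\lceil b/2\rceil$.
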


\begin{proof}
Let $n=b+4+q$ with $q=b-2$, we know from \eqref{eq:main} that
\begin{equation}
e(G) \leq \frac{(b+2)(b+1)+s}{2} + \frac{b(b-1)}{2}. \label{eq:q=b-2}
\end{equation}
Similar to the proof of Claim~\ref{clm:n=2b+1}, we first give a construction $H_{3}$ that the equality in \eqref{eq:q=b-2} holds.
As shown in Fig.~\ref{fig:H3}:

\begin{itemize}
  \item $d_{V_{2}}(v_{i})=2$ for each $i\in \{1,2,...,\left\lfloor\frac{b}{2}\right\rfloor\}$;
  \item $d_{H_{3}}(v_{0}) = d_{H_{3}}(v_{\left\lfloor\frac{b}{2}\right\rfloor+1}) =\cdots= d_{H_{3}}(v_{b+1}) = b+1$;
  \item $H_{3}[V_{2}]\cong K_{b}$;
  \item $H_{3}[\{v_{1},...,v_{\left\lfloor\frac{b}{2}\right\rfloor}\}]$ is a $(\left\lfloor\frac{b}{2}\right\rfloor-4)$-regular graph if $\left\lfloor\frac{b}{2}\right\rfloor$ is even, otherwise it has $\left\lfloor\frac{b}{2}\right\rfloor-1$ vertices of degree $\left\lfloor\frac{b}{2}\right\rfloor-4$ and one vertex of degree $\left\lfloor\frac{b}{2}\right\rfloor-5$.
\end{itemize}
It is easy to see that $H_{3}$ is $S_{3,b}$-free with $e(H_{3})=\left\lfloor\frac{b(2b+2)+2+\left\lfloor\frac{b}{2}\right\rfloor}{2}\right\rfloor=\frac{(b+2)(b+1)+\left\lfloor\frac{b}{2}\right\rfloor}{2} + \frac{b(b-1)}{2}$.

\hfill

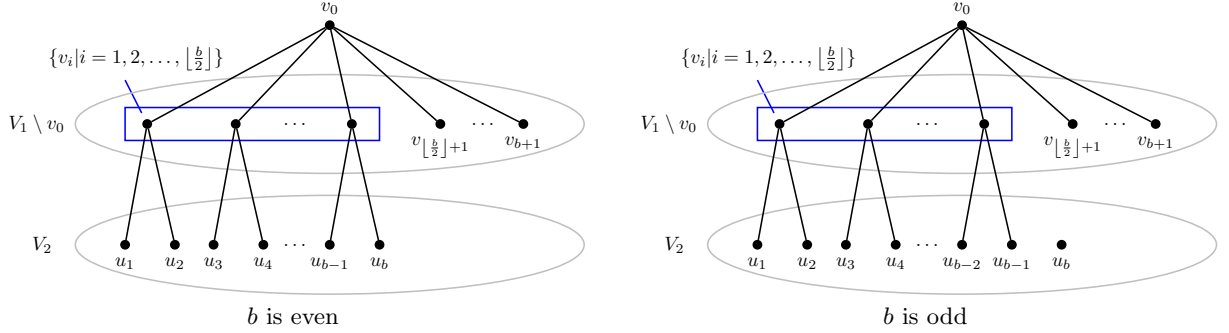
\begin{figure}[H]
\centering
\begin{minipage}[t]{0.48\textwidth}
  \centering
  \resizebox{\linewidth}{!}{%
    \begin{tikzpicture}[
        node/.style={circle, draw, fill=black, inner sep=1.5pt}, 
        dashed-node/.style={circle, dash pattern=on 0.8pt off 0.4pt, draw, inner sep=1.5pt}, 
        line width=0.8pt,
        font=\small
    ]

        \node[node] (v0) at (0, 4.0) {};
        \node at (0, 4.3) {$v_0$}; 
        
        \node[node] (v1) at (-3.3, 2.2) {};
        \node[node] (v2) at (-1.7, 2.2) {};
        \node at (-0.6,2.2) {$\dots$};
        \node[node] (v3) at (0.4, 2.2) {};
       
        \node[node,label=below:$v_{\left\lfloor \frac{b}{2}\right\rfloor+1}$] (v4) at (2.0, 2.2) {};
        \node at (2.8,2.2) {$\dots$};
        \node[node,label=below:$v_{b+1}$] (v5) at (3.5, 2.2) {};
        
        \draw[blue, thick] (-3.7, 1.9) rectangle (0.9, 2.5); 
        \node at (-3.5, 3.4) {$\{v_i|i=1,2,\dots,\left\lfloor \frac{b}{2}\right\rfloor\}$};
        \draw[blue] (-3.7, 3.0) -- (-3.4, 2.4); 

        \draw[lightgray] (0, 2.2) ellipse (4.6cm and 0.9cm); 
        \draw[lightgray] (0, 0) ellipse (4.6cm and 0.9cm); 
        \node at (-5.3, 2.2) {$V_{1}\setminus v_{0}$}; 
        \node at (-5.2, 0) {$V_{2}$}; 
        
        \node[node,label=below: $u_1$] (u1) at (-3.7, 0) {};
        \node[node,label=below: $u_2$] (u2) at (-2.8, 0) {};
        \node[node,label=below: $u_3$] (u3) at (-2.1, 0) {};
        \node[node,label=below: $u_4$] (u4) at (-1.2, 0) {};
        \node at (-0.6,0) {$\dots$};
        \node[node,label=below: $u_{b-1}$] (u5) at (0.0, 0) {};
        \node[node,label=below: $u_{b}$] (u6) at (0.9, 0) {};
        
        \draw (v0) -- (v1);
        \draw (v0) -- (v2);
        \draw (v0) -- (v3);
        \draw (v0) -- (v4);
        \draw (v0) -- (v5);

        \draw (v1) -- (u1); 
        \draw (v1) -- (u2);
        \draw (v2) -- (u3);
        \draw (v2) -- (u4);
        \draw (v3) -- (u5); 
        \draw (v3) -- (u6);       
\end{tikzpicture}}
\vspace{5pt}
\footnotesize{$b$ is even} 
\end{minipage}
\hfill
\begin{minipage}[t]{0.48\textwidth}
\centering
\resizebox{\linewidth}{!}{%
\begin{tikzpicture}[
node/.style={circle, draw, fill=black, inner sep=1.5pt}, 
        dashed-node/.style={circle, dash pattern=on 0.8pt off 0.4pt, draw, inner sep=1.5pt}, 
        line width=0.8pt,
        font=\small
    ]

        \node[node] (v0) at (0, 4.0) {};
        \node at (0, 4.3) {$v_0$}; 
        
        \node[node] (v1) at (-3.3, 2.2) {};
        \node[node] (v2) at (-1.7, 2.2) {};
        \node at (-0.6,2.2) {$\dots$};
        \node[node] (v3) at (0.4, 2.2) {};
       
        \node[node,label=below:$v_{\left\lfloor \frac{b}{2}\right\rfloor+1}$] (v4) at (2.0, 2.2) {};
        \node at (2.8,2.2) {$\dots$};
        \node[node,label=below:$v_{b+1}$] (v5) at (3.5, 2.2) {};
        
        \draw[blue, thick] (-3.7, 1.9) rectangle (0.9, 2.5); 
        \node at (-3.5, 3.4) {$\{v_i|i=1,2,\dots,\left\lfloor \frac{b}{2}\right\rfloor\}$};
        \draw[blue] (-3.7, 3.0) -- (-3.4, 2.4); 

        \draw[lightgray] (0, 2.2) ellipse (4.6cm and 0.9cm); 
        \draw[lightgray] (0, 0) ellipse (4.6cm and 0.9cm); 
        \node at (-5.3, 2.2) {$V_{1}\setminus v_{0}$}; 
        \node at (-5.2, 0) {$V_{2}$}; 
        
        \node[node,label=below: $u_1$] (u1) at (-3.7, 0) {};
        \node[node,label=below: $u_2$] (u2) at (-2.8, 0) {};
        \node[node,label=below: $u_3$] (u3) at (-2.1, 0) {};
        \node[node,label=below: $u_4$] (u4) at (-1.2, 0) {};
        \node at (-0.6,0) {$\dots$};
        \node[node,label=below: $u_{b-2}$] (u5) at (0.0, 0) {};
        \node[node,label=below: $u_{b-1}$] (u6) at (0.9, 0) {};
        \node[node,label=below:$u_{b}$] (u7) at (1.8, 0) {};
        
        \draw (v0) -- (v1);
        \draw (v0) -- (v2);
        \draw (v0) -- (v3);
        \draw (v0) -- (v4);
        \draw (v0) -- (v5);

        \draw (v1) -- (u1); 
        \draw (v1) -- (u2);
        \draw (v2) -- (u3);
        \draw (v2) -- (u4);
        \draw (v3) -- (u5); 
        \draw (v3) -- (u6); 
\end{tikzpicture}}
\vspace{5pt}
\footnotesize{$b$ is odd} 
\end{minipage}
\caption{Two cases of $H_{3}$}
\label{fig:H3}
\end{figure}

\hfill

If $s\le \left\lfloor\frac{b}{2}\right\rfloor$, then by \eqref{eq:q=b-2}, we obtain $e(G)\le e(H_{3})$. If $s\ge \left\lfloor\frac{b}{2}\right\rfloor+1$, then similar to the proof of Claim~\ref{clm:n=2b+1}, there are at most $\left\lceil\frac{b}{2}\right\rceil+1$ vertices in $V_{1}$ with degree $b+1$ and each of the remaining vertices in $V(G)$ has degree at most $b$, so we have $e(G)\le e(H_{3})$. Therefore, we can conclude that $\Delta(G)=b+1$ and $e(G)=\left\lfloor\frac{b(2b+2)+2+\left\lfloor\frac{b}{2}\right\rfloor}{2}\right\rfloor$. This completes the proof of Claim~\ref{clm:n=2b+2}.
\end{proof}

\begin{claim}\label{clm:n<2(b+4)} 
$n<2(b+4)$.
\end{claim}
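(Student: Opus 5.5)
We argue by contradiction, following the outline given at the start of Section~\ref{sec:con}. Suppose $G\in\Ex(n,S_{3,b})$ is connected and $n\ge 2(b+4)$. Write $n=p(b+4)+q$ with $p\ge 2$ and $0\le q\le b+3$. Throughout, $G$ will be compared with the $S_{3,b}$-free comparison graph $G_0=pK_{b+4}\cup K_q$, which has $p\binom{b+4}{2}+\binom{q}{2}$ edges. We will also use, where convenient, $(p-1)K_{b+4}\cup L$ with $L$ near $b$-regular on $b+4+q$ vertices. By Lemma~\ref{lem:maxdegree}, $b\le\Delta(G)\le b+2$.

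\textbf{Step 1: $p\le 4$.} If $\Delta(G)\le b+2$, then $e(G)\le \frac{(b+2)n}{2}$. On the other hand,
\[
e(G_0)\ge p\binom{b+4}{2}=\frac{(b+3)\,p(b+4)}{2}.
\]
Comparing the two bounds gives
\[
(b+2)\bigl(p(b+4)+q\bigr)\ge (b+3)p(b+4),
\]
that is, $(b+2)q\ge p(b+4)$. Since $q\le b+3$, this forces $p\le \frac{(b+2)(b+3)}{b+4}\cdot\frac{1}{1}$. This is too weak on its own, so the degree bound must be refined. Claim~\ref{clm:dvi} and the argument bounding degrees of the $u_i$ show that only a bounded number of vertices can have degree $b+1$ or $b+2$ relative to $n$. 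Indeed, around every vertex of degree at least $b+1$, the neighbours that have outside neighbours have degree at most $b$, and this local configuration can be replaced by a clique. I would make this precise by a discharging or local-replacement count showing $e(G)\le \frac{bn}{2}+c(b)$ for a small $c(b)$, say $c(b)\le b$ (at most about $b$ vertices of excess degree). Then $e(G)\le \frac{bn}{2}+b<p\binom{b+4}{2}$ whenever $3p(b+4)\ge \ldots$, that is, essentially whenever $p(b+4)\cdot\frac{3}{2}>\frac{bq}{2}+b$. Using $q\le b+3$, this yields $p\le 4$ once the constant is controlled. A cleaner route is to invoke Claims~\ref{clm:nneq}--\ref{clm:n=2b+2} directly: they show that connected extremal graphs on fewer than $2(b+4)$ vertices have at most $\frac{bn+b}{2}$ edges, and the same local arguments (which never used $n<2(b+4)$ in Claim~\ref{clm:dvi}) extend to give $e(G)\le \frac{bn+b+2}{2}$.

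\textbf{Step 2: $p\in\{2,3,4\}$ is impossible.} With $e(G)\le\frac{bn}{2}+O(b)$ in hand, compare against $G_0$ when $q$ is small or close to $b+3$. Otherwise, compare against $(p-1)K_{b+4}\cup L$, which has
\[
(p-1)\binom{b+4}{2}+\left\lfloor\frac{b(b+4+q)}{2}\right\rfloor
\]
edges. The surplus over $\frac{bn}{2}$ is $(p-1)\bigl(\binom{b+4}{2}-\frac{b(b+4)}{2}\bigr)=\frac{3(p-1)(b+4)}{2}$. This is at least $\frac{3(b+4)}{2}$, which exceeds the $O(b)$ excess available to $G$ provided that excess is below $\frac{3(b+4)}{2}$. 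Each $p$ is then checked separately, using Lemma~\ref{lem:n1n2} where the sizes of cliques need to be merged.

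\textbf{Main obstacle.} The main obstacle is Step 1's excess bound, namely proving that a connected $S_{3,b}$-free graph with $\Delta\le b+2$ has at most $\frac{bn}{2}+$(something smaller than $\frac{3(b+4)}{2}$) edges. For this I would first try to show that the set $B$ of vertices of degree at least $b+1$ satisfies $\sum_{v\in B}(d(v)-b)\le b+1$, using Claim~\ref{clm:dvi} to force degree deficits at the boundary neighbours $v_1,\dots,v_s$ that offset the excesses.
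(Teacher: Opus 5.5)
There is a genuine gap. Both of your steps rest on an excess bound $e(G)\le\frac{bn}{2}+c(b)$ with $c(b)<\frac{3(b+4)}{2}$, for a connected extremal $G$ with $n\ge 2(b+4)$, and you never prove it. That bound carries the whole content of the claim.

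Your ``cleaner route'' does not supply it. Claims~\ref{clm:nneq}--\ref{clm:n=2b+2} use $n<2(b+4)$ essentially:
\begin{itemize}
\item they bound $e(G[V_2])$ by $\binom{|V_2|}{2}$ with $|V_2|\in\{q+1,q+2\}$;
\item they use that vertices of $V_2$ other than the $u_i$ have degree at most $|V_2|-1<b$.
\end{itemize}
Neither survives once $V_2$ is large.

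The plan in your ``main obstacle'' also cannot work as stated. Take $k$ copies of $K_{b+2}$ and link consecutive copies through a new degree-$2$ vertex, using distinct attachment vertices. This graph is connected and $S_{3,b}$-free: every possible central edge lies inside one clique, and its two ends together see at most $b+2<b+3$ other vertices. Yet almost all vertices have degree $b+1$, and $e-\frac{bn}{2}=\frac{6k+b-4}{2}$ is unbounded. So any such bound must use extremality globally. Claim~\ref{clm:dvi} only constrains the neighbours of a single maximum-degree vertex, so it cannot deliver it.

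The idea you are missing in Step~1 is to compare $\frac{(b+2)n}{2}$ with the graph you already use in Step~2, namely $(p-1)K_{b+4}\cup\hat G$ with $\hat G$ near $b$-regular on $b+4+q$ vertices, rather than with $pK_{b+4}\cup K_q$. A direct computation gives $e((p-1)K_{b+4}\cup\hat G)=\bigl\lfloor\frac{(b+2)n}{2}+\frac{(p-3)(b+4)-2q}{2}\bigr\rfloor$. Since $q\le b+3$, we have $(p-3)(b+4)-2q\ge 2$ whenever $p\ge 5$, so $p\le 4$ with no refinement at all.

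The same computation against $\frac{(b+1)n}{2}$ leaves surplus $\frac{(2p-3)(b+4)-q}{2}>0$ for $p\ge 2$. Together with $pK_{b+4}\cup K_q$ to absorb rounding in the boundary case, this rules out $\Delta(G)\le b+1$.

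Only $\Delta(G)=b+2$ with $p\in\{2,3,4\}$ then remains. The paper handles it by a structural split, not a global excess bound. With $V_1=N[v_0]$:
\begin{itemize}
\item Claim~\ref{clm:dvi}, which holds for every $n$, gives $e(G[V_1])+e(V_1,V_2)\le\binom{b+3}{2}$.
\item $e(G[V_2])\le\ex(n-b-3,S_{3,b})$ is bounded by the results for smaller orders: Claims~\ref{clm:nneq}--\ref{clm:n=2b+2} and Lemma~\ref{lem:n1n2} for $p=2$, then the already excluded smaller values of $p$ for $p=3,4$.
\end{itemize}
Summing these and comparing with $\max\{e((p-1)K_{b+4}\cup\hat G),\,e(pK_{b+4}\cup K_q)\}$ yields the contradiction.
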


\begin{proof}
By Lemma~\ref{lem:maxdegree}, we know $\Delta(G) \le b+2$ and then $e(G)\le \left\lfloor\frac{n(b+2)}{2}\right\rfloor$. Let $n=p(b+4)+q$, where $q\in \{0,1,...,b+3\}$. Let $\hat{G}\in \Ex(b+4+q, K_{1,b+1})$, by Lemma~\ref{lem:K1b}, we have $e(\hat{G})=\left\lfloor\frac{b(b+4+q)}{2}\right\rfloor$. If $(p-3)(b+4)-2q\ge 2$, then
\begin{align*}
e\left((p-1) K_{b+4} \cup \hat{G}\right) 
    &= (p-1) \frac{(b+4)(b+3)}{2} + \left\lfloor\frac{b(b+4+q)}{2}\right\rfloor \\
    &= \frac{(n-q-(b+4))(b+3)}{2} + \left\lfloor\frac{b(b+4+q)}{2}\right\rfloor \\
    &= \left\lfloor\frac{n(b+3)-q(b+3)-(b+4)(b+3)+b(b+4)+qb}{2}\right\rfloor \\
    &= \left\lfloor\frac{n(b+2)}{2} + \frac{n-3q-3(b+4)}{2}\right\rfloor = \left\lfloor\frac{n(b+2)}{2} + \frac{(p-3)(b+4)-2q}{2}\right\rfloor \\
    &> \left\lfloor\frac{n(b+2)}{2}\right\rfloor \geq e(G).
\end{align*}
Note that $(p-1) K_{b+4}\cup \hat{G}$ is $S_{3,b}$-free and $e((p-1) K_{b+4}\cup \hat{G})>e(G)$, a contradiction. So we have $(p-3)(b+4)-2q\le 1$, that is $p\le 4$. Then to prove $n<2(b+4)$, it suffices to show that $p\ne 2,3,4$. If $\Delta (G)\le b+1$, then we obtain $e(G)\le \left\lfloor\frac{n(b+1)}{2}\right\rfloor$ and $e((p-1) K_{b+4}\cup \hat{G} )=\left\lfloor\frac{n(b+1)}{2}+\frac{(2p-3)(b+4)-q}{2}\right\rfloor$. Note that when $p\ge 2$, we have 
\[
  e(G)< \max \left\{e((p-1) K_{b+4}\cup \hat{G}),e(p K_{b+4}\cup K_{q})\right\},
\]
a contradiction, so we get $p=1$ as desired. Therefore, we only need to consider $\Delta(G)=b+2$ and $p=2$ or $3$ or $4$.

\vspace{0.3cm}

\noindent\textbf{Case 1.} $p = 2$.

Note that if $0 \leq q \leq 2$  and  $b+1 \leq q \leq b+3$, then
\[
e(G) \leq \frac{n(b+2)}{2} = \frac{2(b+4)(b+2) + q(b+2)}{2} < e(2K_{b+4} \cup K_{q}),
\]
a contradiction. So we have $3\le q\le b$. Note that $\Delta(G)=b+2$ and $|V_{2}|=2(b+4)+q-(b+3)=b+5+q\le 2b+5$. Let $G_{1}\in \Ex(b+5+q, S_{3,b})$, we have 
\[
  e(G[V_{2}])\le \ex(b+5+q, S_{3,b})=e(G_{1}).
\]    
We claim that
\begin{equation}
  e(G_{1})\le \max \left\{ \frac{b(b+5+q)+2+\left\lfloor\frac{b}{2}\right\rfloor}{2}, \frac{(b+4)(b+3)+q(q+1)}{2} \right\}. \label{eq:p=2}
\end{equation}
If $G_{1}$ is connected, then by Claims~\ref{clm:nneq},~\ref{clm:n=2b+1} and~\ref{clm:n=2b+2}, we have $e(G_{1})\le \frac{b(b+5+q)+2+\left\lfloor\frac{b}{2}\right\rfloor}{2}$. Suppose that $G_{1}$ is not connected, then by Lemma~\ref{lem:n1n2}, we know $G_{1}$ has two connected components. Let $G_{1}=G_{1}^{1}\cup G_{1}^{2}$ with $|V(G_{1}^{1})|=n_{1}\ge n_{2}=|V(G_{1}^{2})|$. Note that $n_{1}\ge b+4$ and $n_{2}\le q+1\le b$, next we consider the following two cases:

\noindent (i) $n_1 \geq b+5$.

Note that $n_1<2(b+4)$ and $n_2 \leq q \leq b$, then again by Claims~\ref{clm:nneq},~\ref{clm:n=2b+1} and~\ref{clm:n=2b+2}, we have 
$e(G_{1}^{1})\le \frac{bn_1+2+\left\lfloor\frac{b}{2}\right\rfloor}{2}$
and $e(G_{1}^{2}) = \frac{n_{2} (n_{2}-1)}{2}$. So we have
\[
e(G_{1}) = e(G_{1}^{1}) + e(G_{1}^{2}) \leq \frac{bn_{1}+2+\left\lfloor\frac{b}{2}\right\rfloor}{2} + \frac{n_{2} (n_{2}-1)}{2} \leq \frac{b(b+5+q)+2+\left\lfloor\frac{b}{2}\right\rfloor}{2}.
\]

\noindent(ii) $n_{1} = b+4$.

Note that $n_{2}=q+1$ and then we have $e(G_{1}) = \frac{(b+4)(b+3)+q(q+1)}{2}$.
So \eqref{eq:p=2} is true. Then we obtain $e(G[V_{2}])\leq e(G_{1})\leq\max\left\{\frac{b(b+5+q)+2+\left\lfloor\frac{b}{2}\right\rfloor}{2},\frac{(b+4)(b+3)+q(q+1)}{2}\right\}$. Moreover, we know from \eqref{eq:b+2} that $e(G[V_{1}])+e(V_{1}, V_{2})\leq \frac{(b+3)(b+2)}{2}$. Therefore,
\begin{align*}
&e(G)=e(G[V_{1}])+e(V_{1}, V_{2})+e(G[V_{2}])\\
&\leq \frac{(b+3)(b+2)}{2} + \max \left\{
    \frac{b(b+5+q)+2+\left\lfloor \frac{b}{2}\right\rfloor}{2}, 
    \frac{(b+4)(b+3)+q(q+1)}{2}
    \right\} \\
&= \frac{(b+4)(b+3)}{2} + \max \left\{
    \frac{b(b+4+q) -(\left\lceil \frac{b}{2} \right\rceil + 4)}{2}, 
    \frac{(b+4)(b+3) + q(q-1) - (2b-2q+6)}{2}
    \right\} \\
&< \frac{(b+4)(b+3)}{2} + \max \left\{
    \left\lfloor\frac{b(b+4+q)}{2}\right\rfloor, 
    \frac{(b+4)(b+3)+q(q-1)}{2}
    \right\} \\
&= \max \left\{ 
    e(K_{b+4} \cup \hat{G}), 
    e(2K_{b+4} \cup K_{q}) 
    \right\},
\end{align*}
a contradiction. Thus we have $p\ne 2$.

\vspace{0.3cm}

\noindent\textbf{Case 2.} $p=3$.

Note that if $0\le q\le 3$ and $b\le q\le b+3$, then 
\[
e(G)\le \frac{n(b+2)}{2}=\frac{3(b+4)(b+2)+q(b+2)}{2}<e(3 K_{b+4}\cup K_{q}),
\]
a contradiction. So we have $4\le q\le b-1$. Note that $\Delta(G)=b+2$ and $|V_{2}|=3(b+4)+q-(b+3)=2b+9+q\le 3b+8$. Let $G_{2}\in \Ex(2b+9+q, S_{3,b})$, we have 
\[
  e(G[V_{2}])\le \ex(2b+9+q, S_{3,b})=e(G_{2}).
\]
We claim that
\begin{equation}
  e(G_{2})\le \max \left\{\frac{(b+4)(b+3)+b(b+5+q)+2+\left\lfloor\frac{b}{2}\right\rfloor}{2}, \frac{2(b+4)(b+3)+q(q+1)}{2} \right\}.
  \label{eq:p=3}
\end{equation}
Note that $G_{2}$ is not connected since $p\ne 2$. Moreover, by Lemma~\ref{lem:n1n2}, we know $G_{2}$ has two or three connected components. Following an analysis similar to the case $p=2$, we can verify that \eqref{eq:p=3} is true. Therefore, we have
\[
\begin{aligned}
&e(G)=e(G[V_{1}])+e(V_{1}, V_{2})+e(G[V_{2}])\\
    &\leq \frac{(b+3)(b+2)}{2} + \max \left\{\frac{(b+4)(b+3)+b(b+5+q)+2+\left\lfloor\frac{b}{2}\right\rfloor}{2}, \frac{2(b+4)(b+3)+q(q+1)}{2}
    \right\} \\
&= 2\cdot\frac{(b+4)(b+3)}{2} + \max \left\{
    \frac{b(b+4+q)-(\left\lceil \frac{b}{2} \right\rceil+4)}{2},
    \frac{(b+4)(b+3)+q(q-1)-(2b-2q+6)}{2}
    \right\} \\
&< 2\cdot\frac{(b+4)(b+3)}{2} + \max \left\{
    \left\lfloor\frac{b(b+4+q)}{2}\right\rfloor,
    \frac{(b+4)(b+3)+q(q-1)}{2}
    \right\} \\
&= \max \left\{
    e(2K_{b+4} \cup \hat{G} ),
    e(3K_{b+4} \cup K_q)
    \right\},
\end{aligned}
\]
a contradiction. So we know $p\ne 3$.

\vspace{0.3cm}

\noindent\textbf{Case 3.} $p=4$.

Similar to the cases $p=2,3$, we can prove $p=4$ is also impossible, so we have $p=1$ as desired. Here we omit the proof since the proof idea is the same as cases $p=2,3$. This proves Lemma~\ref{lem:connected}.
\end{proof}

\section{Proof of Theorem~\ref{thm:main}}\label{sec:main}
We are now ready to prove Theorem~\ref{thm:main}.

Let $G\in \Ex(n, S_{3,b})$ and let $G_{1},...,G_{t}$ be all the components of $G$ with $|V(G_{i})|=n_{i}$. Let $n_{1}\le n_{2}\le \cdots \le n_{t}$. Note that $G_{i}\in \Ex(n_{i}, S_{3,b})$. If $t=1$, then $G=G_{1}$ is connected. So by Lemma~\ref{lem:connected}, we have 
\begin{equation}
e(G) = 
\begin{cases}  
\left\lfloor \dfrac{bn+3}{2} \right\rfloor, & \textrm{ if } b\ge 11 \textrm{ and } n=2b+1; \\[10pt]
\left\lfloor \dfrac{bn+2+\left\lfloor \frac{b}{2} \right\rfloor}{2} \right\rfloor, & \textrm{ if } b\ge 11 \textrm{ and } n=2b+2; \\[10pt]
\left\lfloor \dfrac{bn}{2} \right\rfloor,  &\textrm{ otherwise}. \label{eq:t=1}
\end{cases} 
\end{equation}
Next we consider $t\ge 2$.

\begin{claim}\label{clm:ni<b+4}
$n_{i}\le b+4$ for $i\in \{1,2,...,t-1\}$.
\end{claim}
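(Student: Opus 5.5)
We argue by contradiction. Suppose some $i\le t-1$ has $n_i\ge b+5$. Since $n_i\le n_t$, both $G_i$ and $G_t$ are then connected components with at least $b+5$ vertices. Each component of an extremal graph is itself extremal, so $G_i\in\Ex(n_i,S_{3,b})$ and $G_t\in\Ex(n_t,S_{3,b})$. Lemma~\ref{lem:connected} therefore applies to both and gives $b+5\le n_i,n_t<2(b+4)$. It also gives
\[
e(G_j)\le \frac{bn_j+2+\left\lfloor b/2\right\rfloor}{2}, \qquad j\in\{i,t\}.
\]
The aim is to replace $G_i\cup G_t$ by an $S_{3,b}$-free graph on $m:=n_i+n_t$ vertices with strictly more edges. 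This contradicts the extremality of $G$.

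The replacement I would use is $K_{b+4}\cup F$, where $F$ is a near $b$-regular graph on $m-(b+4)$ vertices. Here $m-(b+4)\ge b+6$, so $F$ exists by Lemma~\ref{lem:K1b}, and it is $K_{1,b+1}$-free, hence $S_{3,b}$-free. Its edge count is
\[
\binom{b+4}{2}+\left\lfloor\frac{b(m-b-4)}{2}\right\rfloor = \frac{bm}{2}+\frac{3(b+4)}{2}-\epsilon, \qquad \epsilon\in\{0,\tfrac12\}.
\]
On the other side,
\[
e(G_i)+e(G_t)\le \frac{bm}{2}+\frac{1}{2}\left(\Bigl\lfloor \tfrac{b}{2}\Bigr\rfloor+2\right)\cdot[\text{number of exceptional components}].
\]
In the worst case both components are exceptional ($b\ge 11$, with $n_i,n_t\in\{2b+1,2b+2\}$), and the surplus is at most $\lfloor b/2\rfloor+2$. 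Since $\frac{3(b+4)}{2}-\frac12>\lfloor b/2\rfloor+2$ for every $b>3$, the replacement strictly gains edges, which is the desired contradiction.

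The main point to verify is the exact surplus in the exceptional cases. For $n=2b+1$ the surplus over $\lfloor bn/2\rfloor$ is only about $3/2$. For $n=2b+2$ it is about $(2+\lfloor b/2\rfloor)/2$. Both are far below the gain of roughly $3(b+4)/2$, so uniform crude bounds are enough and no case split on $b$ is required.

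If one prefers an alternative that avoids the near-regular construction, Lemma~\ref{lem:n1n2}-style reasoning also works. Compare with $K_{b+4}\cup G'$, where $G'$ is extremal on $m-b-4$ vertices. Since $\Delta(G_j)\le b+1$, we have $e(G_i)+e(G_t)\le (b+1)m/2$. However, the $b$-regular bound above is cleaner and is the one I would use. Finally, one should make sure the new graph has the same total number $n$ of vertices. It does, because we only repartition the $m$ vertices of $G_i\cup G_t$ and keep all other components unchanged.
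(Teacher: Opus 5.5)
Your main argument is correct and is essentially the paper's proof: you use Lemma~\ref{lem:connected} to bound the two large components by $\frac{bm}{2}+\lfloor b/2\rfloor+2\le\frac{bm+b+4}{2}$ with $m=n_{t-1}+n_t$, and then beat this with $K_{b+4}$ plus a near $b$-regular graph on $m-(b+4)\ge b+6$ vertices, which has at least $\frac{bm}{2}+\frac{3(b+4)-1}{2}$ edges. Your aside about an alternative using only $\Delta(G_j)\le b+1$ is incomplete as written: against $\binom{b+4}{2}+\lfloor b(m-b-4)/2\rfloor$ it gives a contradiction only for $m\le 3b+10$, while $m$ can reach $4b+14$, so the larger range would need a further construction, and it is right that you do not rely on it.
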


\begin{proof}
Suppose to the contrary that $n_{t-1}\ge b+5$, then $n_{t}\ge n_{t-1}\ge b+5$. So by Lemma~\ref{lem:connected}, we have
\[
e(G_{t-1}\cup G_{t})\leq \frac{bn_{t-1}+2+\left\lfloor\frac{b}{2}\right\rfloor}{2}+\frac{bn_{t}+2+\left\lfloor\frac{b}{2}\right\rfloor}{2} \leq \frac{b(n_{t-1}+n_{t})+b+4}{2}.
\]
Note that $n_{t-1}+n_{t}-(b+4)\ge b+6$. 
Let $H_{1}\in \Ex(n_{t-1}+n_{t}-(b+4), K_{1,b+1})$. By Lemma~\ref{lem:K1b}, we have $e(H_{1})=\left\lfloor\frac{b(n_{t-1}+n_{t}-(b+4))}{2}\right\rfloor$. Then we obtain
  \begin{align*}
  e(G_{t-1}\cup G_{t}) \leq \frac{b(n_{t-1}+n_{t})+b+4}{2}  &< \frac{(b+4)(b+3)}{2} + \left\lfloor\frac{b(n_{t-1}+n_{t}-(b+4))}{2}\right\rfloor \\
  &= e(K_{b+4}\cup H_{1}).
\end{align*}
Clearly $K_{b+4}\cup H_{1}$ is $S_{3,b}$-free, a contradiction. This completes the proof of Claim~\ref{clm:ni<b+4}.
\end{proof}

\begin{claim}\label{clm:ni>b+4}
$n_{i}\ge b+4$ for $i\in \{2,...,t\}$.
\end{claim}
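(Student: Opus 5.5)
We argue by contradiction, exactly in the spirit of Claim~\ref{clm:ni<b+4}: suppose some $i\ge 2$ has $n_i\le b+3$; since the components are ordered, this means $n_1\le n_2\le b+3$. Each component $G_i$ is itself extremal, so a component on at most $b+3$ vertices has maximum degree at most $b+2$ and is $S_{3,b}$-free trivially (any $S_{3,b}$ needs $b+5$ vertices). Hence $G_1$ and $G_2$ are complete: $G_1=K_{n_1}$ and $G_2=K_{n_2}$, since otherwise adding a missing edge keeps the graph $S_{3,b}$-free and increases the edge count.

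We then merge the two small cliques. If $n_1+n_2\le b+4$, replace $K_{n_1}\cup K_{n_2}$ by $K_{n_1+n_2}$; this is $S_{3,b}$-free because $K_{b+4}$ is, and by Lemma~\ref{lem:n1n2} it has strictly more edges. If $n_1+n_2> b+4$, replace them by $K_{b+4}\cup K_{n_1+n_2-b-4}$. Again each component has at most $b+4$ vertices, so the result is $S_{3,b}$-free. Applying Lemma~\ref{lem:n1n2} with the parameter $n=b+5$, which is legitimate since $n_1,n_2\le b+3<b+4$, gives
\[
\binom{n_1}{2}+\binom{n_2}{2}<\binom{b+4}{2}+\binom{n_1+n_2-b-4}{2}.
\]
The other components are untouched, so the new graph has the same number $n$ of vertices and strictly more edges than $G$. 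This contradicts $G\in\Ex(n,S_{3,b})$, so $n_i\ge b+4$ for all $i\ge 2$.

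The only point needing care is the completeness step. We must check that an extremal component of order at most $b+3$ is indeed complete. This is immediate, because $S_{3,b}$ has $b+5$ vertices, so no graph on at most $b+3$ vertices can contain it, and completing the component costs nothing. We must also verify the hypotheses $n_1,n_2<n-1$ of Lemma~\ref{lem:n1n2} with $n=b+5$, which holds since both are at most $b+3$. Apart from this, I expect no real obstacle: the claim is essentially the standard "at most one small clique" argument.
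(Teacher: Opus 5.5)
Your proposal is correct and follows essentially the same route as the paper. Both arguments observe that $G_1\cong K_{n_1}$ and $G_2\cong K_{n_2}$, split on whether $n_1+n_2\le b+4$, and use Lemma~\ref{lem:n1n2} (whose hypothesis you rightly check with parameter $b+5$) to replace the two small cliques by $K_{n_1+n_2}$ or by $K_{b+4}\cup K_{n_1+n_2-(b+4)}$, which is $S_{3,b}$-free and has strictly more edges, contradicting extremality.
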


\begin{proof}
Suppose to the contrary that $n_{2}\le b+3$, then $n_{1}\le n_{2}\le b+3$. So we have $G_{1}\cong K_{n_{1}}$ and $G_{2}\cong K_{n_{2}}$.

If $n_{1}+n_{2}\le b+4$, then by Lemma~\ref{lem:n1n2}, we have
\[
e(G_{1} \cup G_{2}) = \binom{n_{1}}{2} + \binom{n_{2}}{2} < \binom{n_{1} + n_{2}}{2} = e(K_{n_{1}+n_{2}}).
\]
Note that $K_{n_{1}+n_{2}}$ is $S_{3,b}$-free and $G_{1}\cup G_{2}\in \Ex(n_{1}+n_{2}, S_{3,b})$, a contradiction. 
If $n_{1}+n_{2}\ge b+5$, then by Lemma~\ref{lem:n1n2}, we have
\[
e(G_{1} \cup G_{2}) = \binom{n_{1}}{2} + \binom{n_{2}}{2} < \binom{b+4}{2} + \binom{n_{1}+n_{2}-(b+4)}{2} = e(K_{b+4} \cup K_{n_{1}+n_{2}-(b+4)}).
\]
Note that $n_{1}+n_{2}-(b+4)\leq b+2$, so we know $K_{n_{1}+n_{2}-(b+4)}$ is $S_{3,b}$-free, a contradiction. This proves Claim~\ref{clm:ni>b+4}.  
\end{proof}

According to Claims~\ref{clm:ni<b+4},~\ref{clm:ni>b+4}, we get the following results:
\[
n_{1} \leq b+4,\quad n_{2}=n_{3}= \cdots =n_{t-1}= b+4,\quad n_{t} \geq b+4.
\]
Since $G$ is an extremal graph, we obtain
\[
G_{1} \cong K_{n_{1}},\quad G_{2} \cong K_{b+4},\quad \cdots,\quad \ G_{t-1} \cong K_{b+4}.
\]
Next we consider $n_t$.

\vspace{0.3cm}

\noindent\textbf{Case 1.} $n_{t}=b+4$.

In this case, we know $G_t \cong K_{b+4}$. So we obtain $t-1 = p$ and $n_{1} = q$. Therefore,
\[
G \cong (t-1)K_{b+4} \cup K_{n_{1}} \cong pK_{b+4} \cup K_{q},
\]
and
\begin{equation}
e(G) = p\binom{b+4}{2} + \binom{q}{2} = \frac{p(b+4)(b+3) + q(q-1)}{2}. \label{eq:nt=b+4}
\end{equation}

\vspace{0.3cm}

\noindent\textbf{Case 2.} $n_{t}\ge b+5$.

By Lemma~\ref{lem:connected}, we know $e(G_{t})\le \frac{b n_{t}+2+\left\lfloor\frac{b}{2}\right\rfloor}{2}$. Next we show $n_{1}=b+4$. Let $H_{2}\in \Ex(n_{1}+n_{t}, K_{1,b+1})$. By Lemma~\ref{lem:K1b}, we have $e(H_{2})=\left\lfloor\frac{b(n_{1} + n_{t})}{2}\right\rfloor$. If $n_{1}\le b$, then for $b\le 10$, we have
\[
\begin{aligned}
e(G_{1} \cup G_{t}) 
&\leq \frac{n_{1}(n_{1}-1)}{2} + \frac{bn_{t}}{2} \leq \frac{b(n_{1}-1) + bn_{t}}{2} \\
&< \left\lfloor\frac{b(n_{1} + n_{t})}{2}\right\rfloor = e(H_{2}),
\end{aligned}
\]
a contradiction. For $b\ge 11$, we have
\[
\begin{aligned}
e(G_{1} \cup G_{t}) 
&\leq \frac{n_{1}(n_{1}-1)}{2} + \frac{bn_{t} + 2 + \left\lfloor\frac{b}{2}\right\rfloor}{2} \leq \frac{b(n_{1}-1) + bn_{t} + 2 + \left\lfloor\frac{b}{2}\right\rfloor}{2} \\
&< \left\lfloor\frac{b(n_{1} + n_{t})}{2}\right\rfloor = e(H_{2}),
\end{aligned}
\]
a contradiction. If $b+1\le n_{1}\le b+3$, then we have $n_{1}+n_{t}-(b+4)\geq b+2$. Let $H_{3}\in \Ex(n_{t-1}+n_{t}-(b+4), K_{1,b+1})$. By Lemma~\ref{lem:K1b}, we have $e(H_{3})=\left\lfloor\frac{b(n_{t-1}+n_{t}-(b+4))}{2}\right\rfloor$. Note that $n_{1}(n_{1}-(b+1))\le 2(b+3)$, so we obtain
\[
\begin{aligned}
  e(G_{1} \cup G_{t}) 
  &\leq \frac{n_{1}(n_{1}-1)}{2} + \frac{bn_{t} + 2 + \left\lfloor\frac{b}{2}\right\rfloor}{2} \\
  &= \frac{n_{1}(n_{1}-(b+1)) + b(n_{1}+n_{t}) + 2 + \left\lfloor\frac{b}{2}\right\rfloor}{2} \leq \frac{2(b+3) + b(n_{1}+n_{t}) + 2 + \left\lfloor\frac{b}{2}\right\rfloor}{2} \\
  &= \frac{(b+4)(b+3)}{2} + \frac{b(n_{1}+n_{t}-(b+4)) + 2 + \left\lfloor\frac{b}{2}\right\rfloor - (b+6)}{2} \\
  &< \binom{b+4}{2} + \left\lfloor\frac{b(n_{1}+n_{t}-(b+4))}{2}\right\rfloor = e(K_{b+4} \cup H_{3}),
\end{aligned}
\]
a contradiction. So we have $n_{1}=b+4$. Then we obtain $n_{1}=n_{2}=\cdots =n_{t-1}=b+4$. Note that $n_{t}\geq b+5$ and $G_{t}\in \Ex(n_{t}, S_{3, b})$ is connected, then by Lemma~\ref{lem:connected}, we know $n_{t}<2(b+4)$. Note that $n=(p-1)(b+4)+b+4+q$ and $q\ge 1$. So we have $t=p$, $n_{t}=b+4+q$ and then
\[
e(G) = e((p-1) K_{b+4} \cup G_{t}) = (p-1) \binom {b+4} {2} + e(G_{t}),
\]
where 
\[ 
e(G_{t}) = 
\begin{cases}  
\left\lfloor \dfrac{bn_{t} + 3}{2} \right\rfloor, & \textrm{ if } b\ge 11 \textrm{ and } n_{t} = 2b+1; \\[10pt]
\left\lfloor \dfrac{bn_{t}+2+\left\lfloor \frac{b}{2} \right\rfloor}{2} \right\rfloor, & \textrm{ if } b\ge 11 \textrm{ and } n_{t} = 2b + 2; \\[10pt]
\left\lfloor \dfrac{bn_{t}}{2} \right\rfloor,  &\textrm{ otherwise}. 
\end{cases}
\]
Together with \eqref{eq:t=1} and \eqref{eq:nt=b+4}, we obtain
\[
e(G) = \max\left\{(p-1)\binom {b+4}{2} + e(G_{t}),\ p\binom {b+4}{2}+ \binom{q}{2}\right\}.
\]
This completes the proof of Theorem \ref{thm:main}.
\hfill \qed

\section{Concluding remarks}\label{sec:conclu}
Let $a,b\in \mathbb{N}$ with $b>a$. Let $n=p(a+b+1)+q$, where $p,q\in \mathbb{N}$ and $0\le q\le a+b$. We have seen from Theorems~\ref{thm:a=1},~\ref{thm:a=2} and~\ref{thm:main} that for $a\le 3$, $\ex(n,S_{a,b})=p \binom{a+b+1}{2} + \binom{q}{2}$ when $q$ is close to $0$ or $a+b$. Now we show this behavior holds for larger $a$ as well.

\begin{theorem}\label{thm:general} Let $n,a,b\in \mathbb{N}$ with $b>a$ and $n\ge a+b+1$. Let $n=p(a+b+1)+q$, where $p,q\in \mathbb{N}$ and $0\le q\le a+b$. Then for $q=0,1,a+b$, we have $\ex(n,S_{a,b})=p \binom{a+b+1}{2} + \binom{q}{2}$ and $pK_{a+b+1}\cup K_{q}$ is the unique extremal graph.
\end{theorem}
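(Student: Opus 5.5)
Write $m=a+b+1$. The lower bound is immediate, since $pK_m\cup K_q$ is $S_{a,b}$-free: every component has at most $m=|V(S_{a,b})|$ vertices, and $K_m$ does not properly contain $S_{a,b}$ on fewer vertices. For the upper bound, take $G\in\Ex(n,S_{a,b})$ and show that every component of $G$ is a clique on at most $m$ vertices. The count and uniqueness then follow from a convexity (merging) argument in the style of Lemma~\ref{lem:n1n2}.

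\textbf{Step 1: components are small cliques.} Let $H$ be a component of $G$ with $h=|V(H)|$.
\begin{enumerate}[(a)]
\item If $\Delta(H)=a+b$, then Lemma~\ref{lem:maxdegree}(ii) gives $H=K_m$.
\item If $h\le m$ and $\Delta(H)<a+b$, then $H$ is a clique. Otherwise replacing $H$ by $K_h$ keeps the graph $S_{a,b}$-free (when $h<m$ trivially; when $h=m$ we get case (a)) and strictly increases the edge count, contradicting extremality.
\item The key case is $h>m$ with $\Delta(H)\le a+b-1$.
\end{enumerate}

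\textbf{Handling case (c).} Here $e(H)\le (a+b-1)h/2$. Write $h=km+r$ with $k\ge1$ and $0\le r\le a+b$, and replace $H$ by $kK_m\cup K_r$. This gives
\[
\tbinom{m}{2}k+\tbinom r2-\tfrac{(a+b-1)(km+r)}{2}=\tfrac{km+r(r-a-b)}{2}.
\]
This is positive when $r\in\{0,1\}$, but may fail for larger $r$.

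So the main obstacle is that a single large component need not lose to the clique decomposition by itself. We instead use the global structure, replacing all of $G$ at once. The plan:
\begin{itemize}
\item Let $\mathcal L$ be the set of components with more than $m$ vertices, and $\mathcal S$ the remaining ones (cliques of order at most $m$ by (a),(b)).
\item Use Lemma~\ref{lem:n1n2} to show that at most one member of $\mathcal S$ has fewer than $m$ vertices. If two did, merge them into $K_{n_1+n_2}$, or into $K_m\cup K_{n_1+n_2-m}$, strictly gaining edges.
\item Total order of $\mathcal L$ is $n-(\text{\# of }K_m\text{'s})m-r_0$, where $r_0<m$ is the order of the small clique (possibly $0$). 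Hence it is congruent to $q-r_0\pmod m$.
\item Replace all of $\mathcal L$ together with the small clique $K_{r_0}$ by $K_m$'s plus a single $K_q$.
\end{itemize}

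\textbf{The gain computation.} Let $N$ be the total order of $\mathcal L$. We have $e(\mathcal L)\le (a+b-1)N/2$, the old small clique contributes $\binom{r_0}{2}$, and $N+r_0=jm+q$ with $j\ge1$. The gain is
\[
j\tbinom m2+\tbinom q2-\tfrac{(a+b-1)N}{2}-\tbinom{r_0}{2}.
\]
Substituting $N=jm+q-r_0$, this becomes
\[
\tfrac{1}{2}\bigl[jm+q(q-1)-(a+b-1)(q-r_0)-r_0(r_0-1)\bigr].
\]

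\textbf{The cases $q\in\{0,1,a+b\}$.} Write the bracket as $jm+q(q-a-b)+r_0(a+b-r_0)$, using $r_0\le a+b$ so that $r_0(a+b-r_0)\ge0$.
\begin{itemize}
\item For $q\in\{0,a+b\}$, we have $q(q-a-b)=0$, so the gain is at least $jm/2>0$.
\item For $q=1$, the bracket is at least $jm-(a+b-1)>0$.
\end{itemize}
The new graph is $S_{a,b}$-free, so we contradict extremality. Hence $\mathcal L=\emptyset$, and the components are copies of $K_m$ plus at most one smaller clique. Counting vertices forces $G\cong pK_m\cup K_q$, which gives both the value and uniqueness.

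The delicate points to double check are:
\begin{itemize}
\item that (b) correctly uses $\Delta<a+b$ for $h=m$;
\item that (a) rules out $\Delta=a+b$ inside large components;
\item the residue bookkeeping when $r_0=0$.
\end{itemize}
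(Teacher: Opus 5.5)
Your proof is correct, and it uses the same ingredients as the paper. Lemma~\ref{lem:maxdegree} gives $\Delta\le a+b-1$ on every component with more than $a+b+1$ vertices, and Lemma~\ref{lem:n1n2} leaves at most one small clique. The difference is in how you rule out the large components. The paper does no surgery. If some component $G_j$ has $n_j\ge a+b+2$, it bounds the degree sum of the whole graph by $(a+b)(n-n_j)+(a+b-1)n_j=(a+b)n-n_j$ and compares this directly with $pK_{a+b+1}\cup K_q$. This gives $2e(pK_{a+b+1}\cup K_q)-2e(G)\ge n_j-q(a+b+1-q)$, which is positive for $q\in\{0,1,a+b\}$. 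For $q=0$ the paper does not even look at components: $e(G)\le (a+b)n/2=e(pK_{a+b+1})$.

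Because the other components enter the paper's argument only through $\Delta\le a+b$, it needs no prior information about them. Your Steps (a) and (b) and the merging step are only needed afterwards, to identify $G$ as $pK_{a+b+1}\cup K_q$. Your replacement of $\mathcal L\cup K_{r_0}$ by $jK_m\cup K_q$ is essentially the same inequality restricted to the affected components, with the small clique counted exactly as $\binom{r_0}{2}$. It works, but it requires you to first establish the structure of $\mathcal S$ and to track the residue $j$. The paper's global count avoids both.

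One slip: $S_{a,b}$ has $a+b+2=m+1$ vertices, not $m$. The phrase ``$K_m$ does not properly contain $S_{a,b}$'' is therefore not the right justification. The real reason is that every graph on at most $m$ vertices, including $K_m$, is trivially $S_{a,b}$-free. So the lower bound is immediate, and the subcase $h=m$ in (b) needs no separate treatment.
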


\begin{proof}
Let $G\in \Ex(n, S_{a,b})$. Note that $pK_{a+b+1}\cup K_{q}$ is $S_{a,b}$-free, then we have $e(G)\ge e(pK_{a+b+1}\cup K_{q})$. By Lemma~\ref{lem:maxdegree}, we know $\Delta(G)\le a+b$. If $q=0$, then we have
\[
e(G)\le\frac{p(a+b+1)(a+b)}{2}=e(pK_{a+b+1}).
\]
So we get $e(G)=p \binom{a+b+1}{2}$ and $pK_{a+b+1}$ is the unique extremal graph. Now we consider $q\in \{1, a+b\}$. Let $G_{1},...,G_{t}$ be all the components of $G$ with $|V(G_{i})|=n_{i}$, note that $G_{i}\in \Ex(n_{i}, S_{a,b})$. Next we prove that $n_{i}\le a+b+1$ for all $i$. Assume that there exists $j\in \{1,...,t\}$ such that $n_{j}\ge a+b+2$, then by Lemma~\ref{lem:maxdegree}, we have $\Delta (G_{j})\le a+b-1$. Therefore,
\[
\begin{aligned}
e(G)=\sum_{i=1}^t e(G_{i})&=\sum_{i\ne j} e(G_{i})+e(G_{j})\\
&\le \frac{(p(a+b+1)+q-n_{j})(a+b)+n_{j}(a+b-1)}{2}\\
&=\frac{p(a+b+1)(a+b)+q(a+b)-n_{j}}{2}\\
&\le \frac{p(a+b+1)(a+b)+(a+b)(q-1)-2}{2}\\
&< \frac{p(a+b+1)(a+b)+q(q-1)}{2}\\
&=e(pK_{a+b+1}\cup K_{q}),
\end{aligned}
\]
a contradiction. So we obtain $n_{i}\le a+b+1$ for each $i$ and then $G_{i}\cong K_{n_{i}}$. Moreover, by Lemma~\ref{lem:n1n2} and since $q\ne 0$, we know that there is exactly one component of $G$ with the number of vertices less than $a+b+1$. So we have $t=p+1$, then $G\cong pK_{a+b+1}\cup K_{q}$ and $e(G)=p \binom{a+b+1}{2}+ \binom{q}{2}$. This completes the proof of Theorem~\ref{thm:general}.
\end{proof}

Indeed, determining the function $\ex(n,S_{a,b})$ for general $a$ remains an open and challenging problem. Next we provide some insights on $\ex(n,S_{a,b})$.

\begin{remark}
Let $n,a,b\in \mathbb{N}$ with $b>a$ and $n\ge a+b+2$. Let $G\in \Ex(n,S_{a,b})$ and suppose that $G$ is connected, then  
\end{remark}

\begin{enumerate}[$(i)$]
    \item If $a=1$ or $2$, then $\Delta(G)=b$ and $G$ is a near $b$-regular graph.
    \item If $a=3$, then $\Delta(G)=b$ or $b+1$ and $G$ has more edges than the near $b$-regular graph when $n=2b+1$ or $2b+2$.
\end{enumerate}
Next we construct an $n$-vertex $S_{a,b}$-free graph $H$ to show that for larger $a$ and sufficiently large $b$, $G$ has more edges than the $b$-regular graph for a wider range of $n$.

Let $n=a+b+1+q$, where $b-2a+3\le q\le b-a+1$. Let $k=b-a-q$, then $-1\le k\le a-3$.
Let $s=\left\lceil\frac{(k+2)(b-k-1)}{a-1}\right\rceil$.
Suppose $v_{0}\in V(H)$ and $d_{H}(v_{0})=\Delta(H)=b+1$. Let $N_{H}(v_{0})=\{v_{1},v_{2},...,v_{b+1}\}$, $V_{1}=N_{H}[v_{0}]$, $V_{2}=V(H)\setminus V_{1}=\{u_{1},u_{2},...,u_{b-k-1}\}$ be the set of vertices such that $d(u_{i},v_{0})=2$, and $v_{1},v_{2},...,v_{s}$ be all the vertices in $N_{H}(v_{0})$ adjacent to some vertices in $V_2$.
As shown in Fig.~\ref{fig:H general} (for simplicity, we omit the internal structures of the three ellipses in the figure):
\begin{itemize}
  \item $d_{V_{2}}(v_{i})=a-1$ for each $i\in \{1,2,...,s-1\}$ and $d_{V_{2}}(v_{s})=(k+2)(b-k-1)-(s-1)(a-1)$;
  \item $d_{H}(v_{0}) = d_{H}(v_{s+1}) =\cdots= d_{H}(v_{b+1}) = b+1$;
  \item For each $i\in \{1,2,...,s-1\}$, the degree of $v_{i}$ in $H[\{v_{1},...,v_{s}\}]$ is $s-a-1$. And the degree of $v_{s}$ in $H[\{v_{1},...,v_{s}\}]$ is either $s-2-(k+2)(b-k-1)+(s-1)(a-1)$ or $s-3-(k+2)(b-k-1)+(s-1)(a-1)$;
  \item $d_{V_1}(u_{j})=k+2$ for each $j\in \{1,2,...,b-k-1\}$;
  \item $H[V_{2}]\cong K_{b-k-1}$.
\end{itemize}

\hfill

\begin{figure}[htbp]
\centering
    \begin{tikzpicture}[
        node/.style={circle, draw, fill=black, inner sep=1.5pt}, 
        dashed-node/.style={circle, dash pattern=on 0.8pt off 0.4pt, draw, inner sep=1.5pt}, 
        line width=0.8pt,
        font=\footnotesize
    ]

        \node[node] (v0) at (0, 4.0) {};
        \node at (0, 4.3) {$v_0$}; 
        
        \node[node] (v1) at (-3.4, 2.2) {};
        \node[node] (v2) at (-2.9, 2.2) {};
        \node at (-2.5,2.2) {$\dots$};
        \node[node] (v3) at (-2.2, 2.2) {};
        \node at (-1.7,2.2) {$\dots$};
        \node[node] (v4) at (-1.3, 2.2) {};
        \node[node] (v5) at (-0.8, 2.2) {};
        \node at (-0.4,2.2) {$\dots$};
        \node[node] (v6) at (-0.1, 2.2) {};
        \node at (0.4,2.2) {$\dots$};
        \node[node] (v7) at (0.9, 2.2) {};
        \node at (1.4,2.2) {$\dots$};
        \node[node] (v9) at (1.9, 2.2) {};
        \node[node] (v10) at (2.8, 2.2) {};
        \node at (3.3,2.2) {$\dots$};
        \node[node] (v11) at (3.8, 2.2) {};
        
        \draw[blue, thick] (-3.7, 1.9) rectangle (2.2, 2.5); 
        \node at (-4.2, 3.7) {$\{v_i|i=1,2,\dots,k+2,\dots,\left\lceil\frac{(k+2)(b-k-1)}{a-1}\right\rceil\}$};
        \draw[blue] (-3.7, 3.1) -- (-3.4, 2.4); 

        \draw[blue, thick] (2.3, 1.9) rectangle (4.0, 2.5); 
        \node at (4.0, 3.7) {$\{v_i|i=\left\lceil\frac{(k+2)(b-k-1)}{a-1}\right\rceil+1,\dots,b+1\}$};
        \draw[blue] (3.6, 2.4) -- (3.9, 3.1); 

        \draw[blue, thick] (-3.7, -0.3) rectangle (2.4, 0.3); 
        \node at (-3.0, -1.3) {$\{u_j|j=1,2,\dots,a-1,\dots,b-k-1\}$};
        \draw[blue] (-3.8, -0.8) -- (-3.3, -0.2); 

        \draw[lightgray] (1.4, 1.1) ellipse (1.1 and 1.5); 
        
        \draw[lightgray] (0, 2.2) ellipse (4.6cm and 0.9cm); 
        \draw[lightgray] (0, 0) ellipse (4.6cm and 0.9cm); 
        \node at (-5.3, 2.2) {$V_{1}\setminus v_{0}$}; 
        \node at (-5.2, 0) {$V_{2}$}; 
        
        \node[node] (u1) at (-3.4, 0) {};
        \node[node] (u2) at (-2.9, 0) {};
        \node at (-2.5,0) {$\dots$};
        \node[node] (u3) at (-2.2, 0) {};
        \node at (-1.7,0) {$\dots$};
        \node[node] (u4) at (-1.3, 0) {};
        \node[node] (u5) at (-0.8, 0) {};
        \node at (-0.4,0) {$\dots$};
        \node[node] (u6) at (-0.1, 0) {};
        \node at (0.4,0) {$\dots$};
        \node[node] (u7) at (0.9, 0) {};
        \node at (1.4,0) {$\dots$};
        \node[node] (u7) at (1.9, 0) {};
        
        \draw (v0) -- (v1);
        \draw (v0) -- (v2);
        \draw (v0) -- (v3);
        \draw (v0) -- (v4);
        \draw (v0) -- (v5);
        \draw (v0) -- (v6);
        \draw (v0) -- (v7);
        \draw (v0) -- (v9);
        \draw (v0) -- (v10);
        \draw (v0) -- (v11);

        \draw (v1) -- (u1); 
        \draw (v1) -- (u2);
        \draw (v1) -- (u3); 
        \draw (v2) -- (u1);
        \draw (v2) -- (u2); 
        \draw (v2) -- (u3);
        \draw (v3) -- (u1);
        \draw (v3) -- (u2); 
        \draw (v3) -- (u3); 
        
        \draw (v4) -- (u4); 
        \draw (v4) -- (u5);
        \draw (v4) -- (u6); 
        \draw (v5) -- (u4);
        \draw (v5) -- (u5); 
        \draw (v5) -- (u6);
        \draw (v6) -- (u4);
        \draw (v6) -- (u5); 
        \draw (v6) -- (u6); 
        
\end{tikzpicture}
\caption{$H$}
\label{fig:H general}
\end{figure}
\hfill
Since $\Delta (H)=b+1$ and only the vertices $v_{0},v_{s+1},...,v_{b+1}$ attain degree $b+1$, it is easy to see that $H$ is $S_{a,b}$-free. Moreover, the remaining vertices in $V(H)$ have degree $b$, except for $v_{s}$, whose degree is either $b$ or $b-1$. So we know $H$ has more edges than the near $b$-regular graph on $a+b+1+q$ vertices. Note that
\[
\begin{aligned}
e(H)=\frac{1}{2}\sum_{v\in V_1}d_{H}(v)+\frac{1}{2}\sum_{v\in V_2}d_{H}(v)&\ge \frac{b(a+b+1+q)+b-s+1}{2}\\
&\ge \frac{b(a+b+1+q)+b-\frac{(b-a-q+2)(a-1+q)}{a-1}}{2}.   
\end{aligned}
\]
Then we have
\[
\begin{aligned}
&e(H)-e(K_{a+b+1}\cup K_{q})\\
&\ge \frac{b(a+b+1+q)+b-\frac{(b-a-q+2)(a-1+q)}{a-1}-(a+b+1)(a+b)-q(q-1)}{2}\\
&= \frac{(a-2)q(b-q+2)-(a-1)a(a+b)+aq-2(a-1)}{2(a-1)}.
\end{aligned}
\]
Let $f(q)=(a-2)q(b-q+2)-(a-1)a(a+b)+aq-2(a-1)$. Since $a\ge 3$, we know the leading coefficient of $f(q)$ is negative. Moreover, since $a\ge 3$ and $b$ is sufficiently large, we obtain $f(b-2a+3)>0$ and $f(b-a+1)>0$. So we have $f(q)>0$ for $b-2a+3\le q\le b-a+1$, then $e(H)>e(K_{a+b+1}\cup K_{q})$.

Hence for any $a\ge 3$, there exists $a-1$ particular cases where $G$ has more edges than $K_{a+b+1}\cup K_{q}$ or the near $b$-regular graph on $a+b+1+q$ vertices. This also explains why in Theorem~\ref{thm:main} ($a=3$), we have two more cases than Theorem~\ref{thm:a=1} and \ref{thm:a=2}. In fact, for $a=2$, a special case with $q=b-1$ also exists. However, the graph in this case has fewer edges than $K_{a+b+1}\cup K_{b-1}$, so it does not exist in Theorem~\ref{thm:a=2}.

This pattern implies that larger $a$ leads to more cases where the connected extremal graph outperforms the near $b$-regular graph. It also hints that for the function $\ex(n,S_{a,b})$, a larger $a$ corresponds to a wider $n$-range with extremal values deviating from previous results, and the extremal graph may become more diverse.


\bigskip

\bibliographystyle{plain} 
\bibliography{references}

\end{document}